\colorlet{lgray}{white!85!black}
\colorlet{lred}{white!75!red}
\newtheorem{theorem}{Theorem} 
\newtheorem*{theorem*}{Theorem}
\newtheorem{proposition}[theorem]{Proposition}
\newtheorem{corollary}[theorem]{Corollary}
\theoremstyle{remark}
\newtheorem{remark}[theorem]{Remark}
\newtheorem{example}[theorem]{Example}
\numberwithin{equation}{section} \numberwithin{theorem}{section}
\newcommand{\la}{\lambda}
\newcommand{\Z}{\mathbb Z}
\newcommand{\R}{\mathbb R}
\newcommand{\C}{\mathbb C}
\title[Color-position symmetry]
{Color-position symmetry in interacting particle systems}
\author{Alexei Borodin}
\address[Alexei Borodin]{Department of Mathematics, MIT, Cambridge, USA, and Institute for Information
Transmission Problems, Moscow, Russia. E-mail: borodin@math.mit.edu}
\author{Alexey Bufetov}
\address[Alexey Bufetov]{Hausdorff Center for Mathematics \& Institute for Applied Mathematics, University of Bonn, Germany. E-mail: alexey.bufetov@gmail.com}
\begin{document}

\begin{abstract}
	
We prove a color-position symmetry for a class of ASEP-like interacting particle systems with discrete time on the one-dimensional lattice. The full space-time inhomogeneity of our systems allows to apply the result to colored (or multi-species) ASEP and stochastic vertex models for a certain class of initial/boundary conditions, generalizing previous results of Amir-Angel-Valko and Borodin-Wheeler. We are also able to use the symmetry, together with previously known results for uncolored models, to find novel asymptotic behavior of the second class particles in several situations.

\end{abstract}

\maketitle

\section{Introduction}

The first hint to color-position symmetry in interacting particle systems
dates back to an elegant work of Ferrari-Kipnis \cite{FK}, who showed that the
behavior of a second class particle of the Totally Asymmetric Simple Exclusion
Process (or TASEP) in a rarefaction fan can be obtained from the density
function and characteristics of the first class particles. Let us start by
giving an exact formulation of one of their results.

The TASEP is a prototypical interacting particle system on $\mathbb{Z}$ that
consists of particles occupying integer sites, no more than one particle per
site. It evolves in continuous time with particles jumping to the right by one,
provided that the target site is empty, using independent exponential clocks.
One distinguishes the homogeneous TASEP, when the rates of all clocks are equal
to 1, and the inhomogeneous one, when the clock rates may depend on space and
time. Since early 1970's, the TASEP has been extensively studied in hundreds of
papers, cf., e.g., the books by Liggett
\cite{L3}, Kipnis-Landim \cite{KL}, and references therein.

The \emph{colored}, or {multi-species} TASEP is a similar particle systems where particles have an additional feature called color, which is typically integer-valued. The color affects the evolution in the following way: Particles with lower color treat particles with higher color as holes. More exactly, whenever a ringing clock tells a particle to jump, it swaps positions with the site immediately to its right if and only if that site is either unoccupied or occupied by a particle of a higher color. For uniformity of presentation, it is convenient to treat unoccupied sites as being occupied by particles of color $+\infty$. \footnote{The convention of using particles with lower color as ``more important'' ones is not canonical, e.~g., \cite{BorWh} uses the opposite one. Our convention in this paper is aligned with the common usage of the terms "first class'' and ``second class'' particles, with the intention that the word ``class'' could be replaced by ``color'' without changing the evolution rules.}

Consider now the TASEP with the following, often called \emph{step} initial condition: First class particles (equivalently, particles of color 1) occupy all the negative integers. Also, let us place a single second class particle at site 0. Then \cite{FK} showed, by a simple coupling argument, that at any time $t>0$, the probability to find the second class particle at a position $>x$ is equal to the probability to find $x$ occupied by a first class particle. Together with the well-known large time behavior of the density of the first class particles (that goes back to \cite{R} for the step initial condition),
this implied, in particular, that at large times the second class particle is asymptotically uniformly distributed on $[-t,t]$.\footnote{\cite{FK} also proved more refined results on the asymptotic behavior of the second class particle at different times (it follows a characteristic). We omit those as in the present text we focus only on a fixed time picture.}

Amir-Angel-Valko \cite{AmAnV}, cf. also the previous paper of
Angel-Holroyd-Romik \cite{AnHR}, found a way to substantially generalize the
results of \cite{FK}.

First, they considered the lattice $\mathbb{Z}$ filled with particles of colors ranging also over $\mathbb{Z}$, one particle per each color. This puts positions and colors on the same footing and allows to interpret a particle configuration as a bijection $\pi_{p\to c}:\mathbb{Z}\to\mathbb{Z}$ such that $\pi_{p\to c}(x)$ is the color of the particle at position $x$. Alternatively, one can define a bijection $\pi_{c\to p}:\mathbb{Z}\to\mathbb{Z}$ by saying that $\pi_{c\to p}(c)$ is the position of the particle of color $c$. Clearly, $\pi_{p\to c}$ and $\pi_{c\to p}$ are mutually inverse.

Second, they considered a partially asymmetric version of the TASEP, known as PASEP or simply ASEP. Under the ASEP evolution, the particles can jump both right and left by 1 with different rates. Let us say that more formally.

Fix a parameter $q\ge 0$, and for any two neighboring sites $z,z+1\in\mathbb{Z}$ and $x\in [0,1]$, define a random \emph{asymmetric swap} $W_{(z,z+1),x}$ that acts on particles located at $z,z+1$ of a colored particle configuration in $\mathbb{Z}$ as depicted on Figure \ref{Fig1-intro}, see also the caption (nothing happens if the color of particles at $z,z+1$ is the same; recall also that unoccupied sites are treated as those occupied by particles of color $+\infty$).

\begin{figure}
	\centering
	\includegraphics[width=10cm]{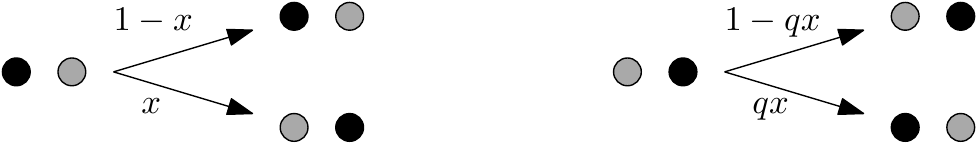}
	\caption{The action of a random asymmetric swap; the color of the black particle is assumed to be lower than that of the grey one.}  \label{Fig1-intro}
\end{figure}

The colored ASEP evolution is defined via applications of such asymmetric swaps
at random times that form independent Poisson processes of rate 1 for each pair
$\{z,z+1\}\subset\mathbb{Z}$. If the parameter $x$ is independent of the
position $z$ and the time $t$ of the swap, then one speaks of a homogeneous
ASEP, otherwise it is (spatially, temporarily, or doubly) inhomogeneous. The
TASEP corresponds to $q=0$ (note that we keep the asymmetry
parameter $q\in [0,1]$ constant).

Consider now a \emph{packed} initial configuration -- every $x\in\mathbb{Z}$ is
occupied by a particle of color $x$. Clearly, this corresponds to the two
(mutually inverse) permutations $\pi_{p\to c}$ and $\pi_{c\to p}$ being equal to
the identity. Amir-Angel-Valko \cite[Theorem 1.4]{AmAnV} proved the following
striking symmetry for the (homogeneous) colored ASEP: With the packed initial
condition, and for any $t\ge 0$, the two permutations  $\pi_{p\to c}$ and
$\pi_{c\to p}$ that parametrize the particle configuration at time $t$ have the
same distribution.

This symmetry readily implies the above stated second class particle result of \cite{FK}. Indeed, it suffices to notice that the colored ASEP can always be projected onto one with fewer colors by declaring all particles with colors from any interval $I\subset\Z$ as having the same color (and naturally extending the color ordering to this new color). Hence, uniting the colors from $(-\infty,-1)$ and calling the corresponding particles ``first class'', calling the particle of color 0 ``second class'', and uniting the colors from $(1,+\infty)$ and calling them holes, we obtain the process from \cite{FK}; the fact that $\mathrm{Prob}\{\pi_{c\to p}(0)>x\}=\mathrm{Prob}\{\pi_{p\to c}(0)>x\}$ delivers the result\footnote{via interpreting the right-hand side as the probability of having 0 unoccupied at time $t$ in the conventional ASEP started with $(-\infty,x)$ filled by particles, and applying the particle-hole involution.}.

Here is the main symmetry result of the present paper.

\smallskip

\noindent\textbf{Symmetry Theorem} (cf.~Theorem \ref{th1} below).
\emph{Consider the packed initial configuration, any $x\in\Z$ is occupied by a
particle of color $x$, and apply an arbitrary finite sequence
$W_{(z_1,z_1+1),x_1}, \dots, W_{(z_k,z_k+1),x_k}$ of random asymmetric swaps to
it (the asymmetry parameter $q\in [0,1]$ remains fixed, but $z_i$'s in
$\mathbb{Z}$ and $x_i$'s in $[0,1]$ are arbitrary). Then the distribution of
the position-to-color permutation $\pi_{p\to c}$ for the resulting configuration is
equal to the distribution of the color-to-position permutation $\pi_{c\to p}$
that corresponds to the configuration obtained by applying the same set of
asymmetric swaps to the packed initial configuration \emph{in the opposite
order}.}

\smallskip

We find this result rather surprising, and at the moment we cannot offer a
conceptual understanding of why it should hold. The proof that we give is an
induction argument that is much more a verification than a derivation.

For $x_k\equiv 1$ this theorem  is identical to \cite[Lemma 3.1]{AmAnV}.
However, the introduction of (both spatial and temporal) inhomogeneities is
important; for example, it allows to access the stochastic six vertex model,
reproving and extending a certain symmetry of that model observed in
\cite{BorWh}.

We see the main probabilistic contribution of the present work in
providing new ways of utilizing the Symmetry Theorem for large time analysis of
ASEP-like particle systems. As we demonstrate below, it offers a number of
delightful and nontrivial corollaries. We also believe that what we do in this work
does not exhaust a list of such by a good margin.

\subsection*{Second class particle in the inhomogeneous ASEP} Consider the
doubly inhomogeneous colored ASEP with bond rates $x(z,t)$ depending both on
space $z$ and time $t$ (and satisfying natural technical assumptions), and start
it with the packed initial condition. Let $\pi_{p\to c}$ be the
position-to-color permutation of the resulting configuration at time $T>0$. Then, by Symmetry Theorem, 
it has the same distribution as $\pi_{c\to p}$ for the configuration at time $T$
of the \emph{time-reversed} colored ASEP with bond rates $\hat x(z,t)=x(z,T-t)$
and also started from the packed initial condition.

When applied to the second class particle problem, this gives the following result. Consider the ASEP with first-class particles filling $(-\infty,-1)$ and a single class particle at 0 initially, with doubly inhomogeneous bond rates $x(z,t)$. For the time-reversed process, take the single-colored ASEP with bond rates $\hat x(z,t)=x(z,T-t)$ and the $y$\emph{-shifted} step initial condition: $(-\infty,y]$ is filled by particles for some $y\in\mathbb{Z}$. Then, cf. Theorem \ref{th:inh2cl} below, we show that
\begin{gather*}
\mathrm{Prob}\{\text{the second class particle is weakly to the left of position $y$ at time $T$} \}\\
=\mathrm{Prob}\{ \text{position $y$ is occupied at time $T$ in time-reversed ASEP with $y$-shifted step IC} \}.
\end{gather*}

Whenever asymptotic results for the density of the conventional inhomogeneous ASEP are available, e.g., for smoothly changing spatial inhomogeneities as proved by Bahadoran \cite{B}, they would thus immediately give the asymptotic distribution of our second class particle.

Observe that for different values of $y$, the time-reversed ASEPs are \emph{different}, as they involve different $y$-shifts of the step initial condition that are not equivalent if the bond rates are spatially inhomogeneous. Also, in general, there seems to be no easy way to relate these time-reversed ASEPs to the original ASEP, unlike the homogeneous case.

\subsection*{Finite perturbation of the step initial condition} Fix an integer
$L\ge 0$, and consider the (homogeneous) ASEP on $\mathbb{Z}$ with first and
second class particles and with the following initial condition: First class
particles occupy all positions in $(-\infty,-L)$, there are no particles in
$(L,+\infty)$, and on the segment $[-L,L]$ we have an arbitrary mix of the
first and second class particles and holes.

Let $S_1(t)\le S_2(t)\le \dots\le S_N(t)$ be the positions of all the second class particles at time $t\ge 0$. Theorem \ref{th:finPert} below completely described the limiting distribution function as $t\to\infty$ for each $S_k$, $1\le k\le N$ (but not their joint distribution). Rather than giving the full statement here, let us provide an example. Consider the initial condition depicted in Figure \ref{Fig2-intro}; here $L=1$, the only second class particle is at $-1$, 0 is occupied by a first class particle, and 1 is unoccupied.

\begin{figure}
	\centering
	\includegraphics[width=9cm]{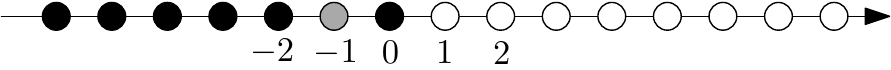}
\caption{An example of a finitely perturbed step initial condition.}
\label{Fig2-intro}
\end{figure}

Then Theorem \ref{th:finPert} yields the following limit for the asymptotic position of the second class particle, cf. Example \ref{ex:quadratic} below:
\begin{equation*}
\lim_{t \to \infty} \mathrm{Prob} \left\{\frac{S_1(t)}{t}\le y \right\} =\begin{cases} 0, &y\le q-1,\\\alpha(y)+(1-q)\alpha(y)(1-\alpha(y)),&q-1<y<1-q,\\1,&y\ge 1-q, \end{cases}
\end{equation*}
where $\alpha(y)=\frac12 (1+\frac{y}{1-q})$. To our knowledge, already for this simple perturbation of the step initial condition, the limit theorem is new.

Note that the above limiting distribution substantially depends on $q$, thus
disagreeing with the usual intuition that limiting behavior of ASEPs with
different $q$ is expected to be the same up to an appropriate time change.

In the TASEP ($q=0$) and one second class particle ($N=1$) case, similar results can be deduced from the main result of Cator-Pimentel
\cite{CP}, but their methods do not seem to be extendable to the $q\ne 0$ situation.

\subsection*{Dissolving GUE-GUE shock for the TASEP} For this application we focus on the (homogeneous) TASEP (i.~e., $q=0$) with the initial condition depicted in Figure \ref{Fig3-intro}; the parameter $L$ and the time $t$ of the evolution of the system are assumed to be dependent and growing.

When $t\sim L$, the first particles of the left group start reaching 0, while the second class particle at zero starts moving (it cannot move before because of the first class particles in front of it). The speed of the particles from the left group far exceeds that of the last particles of the right group, which creates a macroscopic shock at 0. We positioned the second class particle so that it would initially be at the location of that shock. In certain situations, one can use the evolution of a second class particle placed at the shock location to define the shock at the microscopic scale, cf. Ferrari-Kipnis-Saada \cite{FKS}, Ferrari \cite{F2}, Derrida-Lebowitz-Speer \cite{DLS}.

With our initial condition, as $t$ becomes much larger than $L$, the
macroscopic shock disappears (the asymptotic density profile becomes
indistinguishable from $L=0$). It remains visible, however, at the level of
$t^{1/3}$ fluctuations as a ``GUE-GUE shock'', cf. Ferrari-Nejjar \cite{FN1}, \cite{FN2}, Quastel-Rahman \cite{QR}, until $t$ becomes
much larger than $L^{3/2}$. We investigate the asymptotic behavior of the second
class particle at times $t\sim \mathrm{const}\cdot L^{3/2}$, when a diminishing
shock effect is still present at the fluctuation level.

We prove, see Theorem \ref{th:shock2cl} below, that if $L = \lfloor c t^{2/3} \rfloor$,
$c \in \R_{>0}$, $y \in \R$, and $S_1(t)$ is the position of the second class
particle, then
\begin{equation*}
\lim_{t \to \infty} \mathrm{Prob} \left( \frac{S_1(t)}{t^{2/3}} \le y \right) = \mathrm{Prob} \left( \mathcal{A}_2 \left( \frac{-y+c}{2^{1/3}} \right) - \mathcal{A}_2 \left( \frac{-y-c}{2^{1/3}} \right) \ge 2^{4/3} yc \right),
\end{equation*}
where $\mathcal{A}_2$ is the Airy$_2$ process.

It is also interesting to investigate the behavior of the second class particle
for other dependencies of the form $L\sim t^{a}$ as $t\to\infty$; we hope to
address that in a later work.

\begin{figure}
	\centering
	\includegraphics[width=12cm]{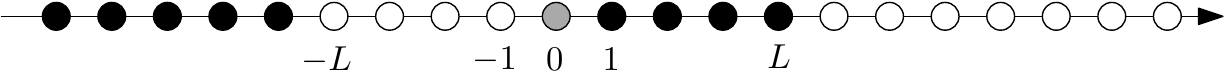}
	\caption{Shock inducing initial condition; the 2nd class particle is in grey.}
	\label{Fig3-intro}
\end{figure}

\subsection*{The symmetry for the stochastic six vertex model}\ One way to think of the stochastic six vertex model is as
of a discrete time analog of the ASEP. It was originally introduced and studied by Bethe ansatz methods by Gwa-Spohn in 1992 \cite{GS}. Much more recently, it was realized that this model is much more powerful than the ASEP due to its deep connection to the extensive algebraic structure of quantum groups; one corollary of that is that one can obtain many other probabilistic systems via its \emph{fusion} and subsequent degeneration. A lecture style exposition of the fusion procedure can be found in \cite{BP3}. There was also a flurry of recent activity in proving different asymptotic results for it, see, e.g., Borodin-Corwin-Gorin \cite{BCG}, Aggarwal \cite{Ag1, Ag2}, Aggarwal-Borodin \cite{AB}, Reshetikhin-Sridhar \cite{RS}, Corwin-Ghosal-Shen-Tsai \cite{CGST}, Borodin \cite{Bo}, Borodin-Olshanski \cite{BO}, Borodin-Gorin \cite{BG}, Shen-Tsai \cite{ST}.

The colored (or multi-species) six vertex model is even more recent; it first appeared in the work of Kuniba-Mangazeev-Maruyama-Okado \cite{KMMO}, see also Kuan \cite{K}. The only asymptotic results currently available in the colored case originate from a certain distributional match between colored and uncolored models discovered by Borodin-Wheeler \cite{BorWh} together with previously known results in the uncolored case, see Section 1.8 of \cite{BorWh} for more details. That match was the original motivation of the present work.

We show, in Corollary \ref{th:symmetryV} below, that our Symmetry Theorem directly applies to the colored six vertex model on domains with arbitrary monotone lattice boundaries, where it produces an identity between partition functions for a domain and its image rotated by 180 degrees, with appropriate boundary conditions. One consequence of this result is an expression, in Theorem \ref{th:HFmultiCvert} below, of the random vector representing the colored height function of the colored model at a vertex in terms of a Hall-Littlewood process on partitions. (A similar result appeared in \cite{BorWh} for rectangular domains, yet that result is still different from the one coming from Theorem \ref{th:HFmultiCvert}, with neither of the two implying the other.)
 We anticipate that this will lead to significant asymptotic results, as Hall-Littlewood processes (and Schur processes that arise when $q=0$) are known to be asymptotically accessible, cf. Dimitrov \cite{D}, Borodin \cite{Bo},
Corwin-Dimitrov \cite{CD}, and references therein. However, we decided not to address such asymptotics in the present text.

\smallskip

The organization of the paper is similar to that of the introduction -- we start with a proof of the Symmetry Theorem (in the language of permutations) and then proceed to the above described applications following the same order.

\subsection*{Acknowledgments} We are grateful to A.~Aggarwal and P. L.~Ferrari for very valuable comments.
The work of A.~Borodin was partially supported by the NSF grants DMS-1607901 and DMS-1664619.

\section{Color-position symmetry: permutations}
\label{subsec:perm1}

In this section we formulate and prove our main combinatorial result.

\subsection{Preliminaries}

Let $S_N$ be the permutation group on $N$ elements, and let $\C [S_N]$ be its group algebra. We use the notational convention $\sigma_2 \sigma_1 (i) = \sigma_2 (\sigma_1 (i))$ for $\sigma_1, \sigma_2 \in S_N$. Let $t = (A, A+1)$ be a transposition of two neighboring integers, and let $q \in \C$ be a fixed parameter. For $x \in \C$ we define a linear operator $w_{t,x}: \C [S_n] \to \C [S_n]$ via the following action on $\sigma \in S_N$:
\begin{equation*}
w_{t,x} ( \sigma) = \begin{cases}
(1 - x) \sigma + x t \sigma, \qquad & \mbox{if $\sigma^{-1} (A)<\sigma^{-1} (A+1)$}, \\
(1 - q x) \sigma + q x t \sigma, \qquad & \mbox{if $\sigma^{-1} (A) > \sigma^{-1} (A+1)$}.
\end{cases}
\end{equation*}

It is helpful to think about this and subsequent notions in terms of certain colored pictures. Consider $N$ vertical segments numbered from $1$ to $N$, and assign to each of this segment a color. Colors are also numbered from $1$ to $N$, and we use each color exactly once. Then a permutation $\sigma$ encodes the information that vertical line $\sigma(i)$ has color $i$, $1 \le i \le N$ (see Figure \ref{fig:1}). One can also say that vertical line $j$ is colored by $\sigma^{-1} (j)$. The linear operator $w_{t,x}$ can be depicted by a cross which involves the interaction between two vertical lines, see Figure \ref{fig:2}. This cross has a weight depending on colors involved.
The expansion of $w_{t,x} ( \sigma)$ can be encoded as in Figure \ref{fig:3}, left panel: The cross must have one of two allowed types, the top coloring is given by either $\sigma$ or $t \sigma$, and such picture has weight of cross.

\begin{figure}
\begin{center}
\noindent{\scalebox{0.27}{\includegraphics[height=7cm]{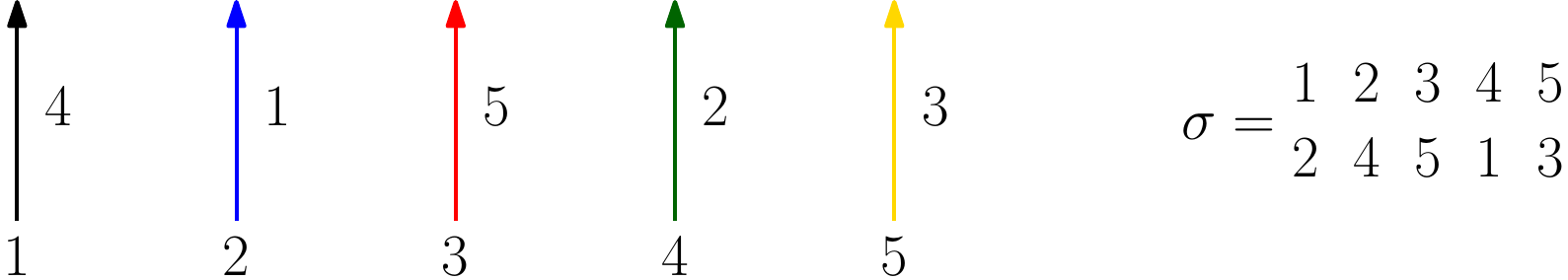}}}
\caption{A pictorial representation of a permutation, $N=5$. \label{fig:1}}
\end{center}
\end{figure}

\begin{figure}
\begin{center}
\noindent{\scalebox{0.7}{\includegraphics[height=7cm]{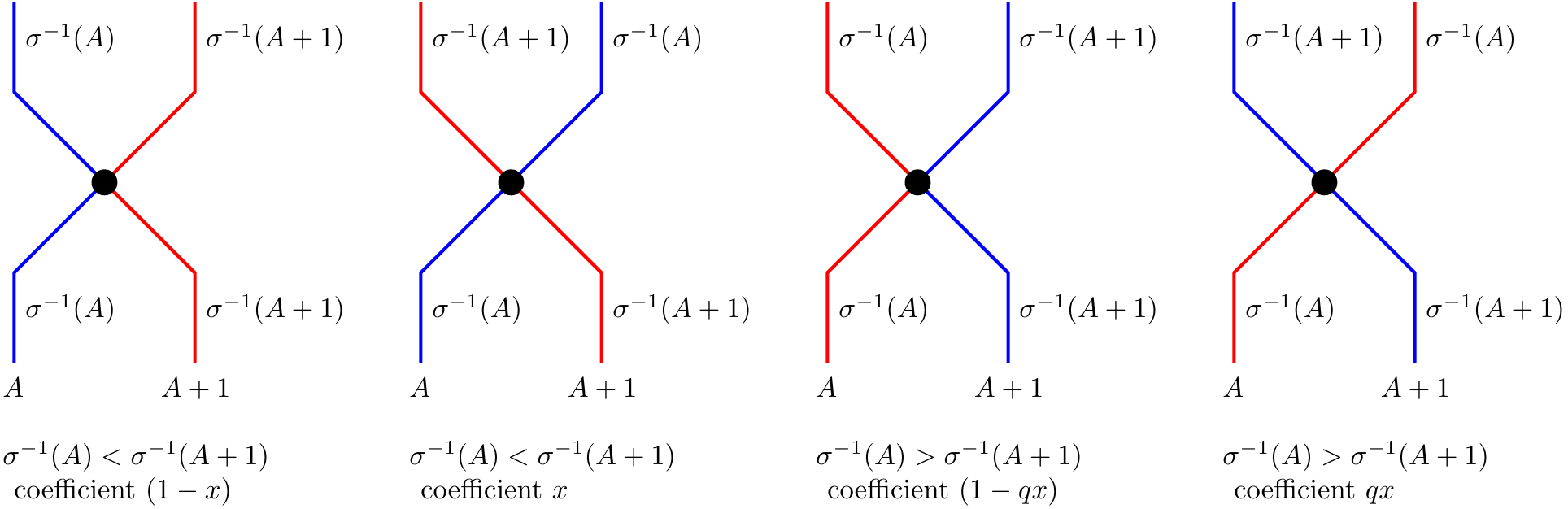}}}
\caption{A pictorial representation of $w_{t,x}$. \label{fig:2}}
\end{center}
\end{figure}

For transpositions $t_1, \dots, t_n$ and parameters $x_1, \dots, x_n$ one has an expansion
\begin{equation}
\label{eq:model-def}
w_{t_n, x_n} w_{t_{n-1}, x_{n-1}} \dots w_{t_1, x_1} s =: \sum_{\pi \in S_N} f_n (s \to \pi) \pi, \qquad s \in S_N,
\end{equation}
which defines the coefficients $\{ f_n (s \to \pi) \}_{s, \pi \in S_N}$ (we omit the dependence on $\{ t_i \}$ and $\{ x_i \}$ in the notation for them). In pictorial terms, we consider vertical lines with $n$ crosses. To each possible configuration of colors we assign a weight which is equal to the product of weights of all crosses. Then, to obtain the coefficients $f_n (s \to \pi)$, we set the bottom coloring to be equal to $s$, the top coloring to be equal to $\pi$, and we sum the weights of all possible configurations with such bottom and top (see Figure \ref{fig:3} for an example).

\begin{figure}
\begin{center}
\noindent{\scalebox{0.55}{\includegraphics[height=7cm]{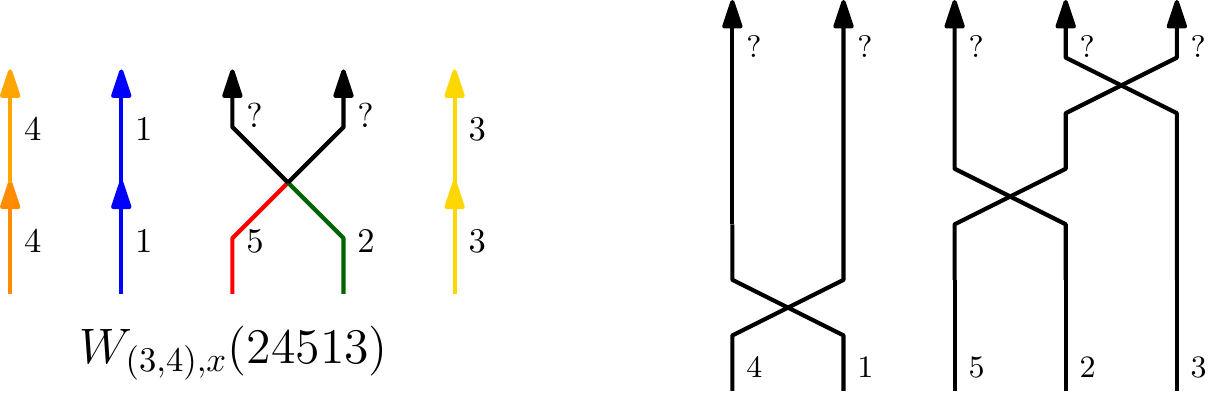}}}
\caption{Left panel: A configuration with one cross. There are two possible permutations on the top. Right panel: A configuration with $n=3$ crosses. All possible colorings with a fixed coloring $s$ at the bottom correspond to the sum $\sum_{\pi \in S_N} f_3 (s \to \pi) \pi$, where the coloring of the top row is $\pi$. \label{fig:3}}
\end{center}
\end{figure}

\begin{proposition}
\label{prop:main}
Let $T = (i,i+1)$ be a transposition, and $s$, $\pi$ be arbitrary permutations. Let $t_1, \dots, t_n$ and $x_1, \dots, x_n$ be an arbitrary choice of parameters as above. Assume that $s (i) < s (i+1)$.

Then we have
\begin{equation}
\label{eq:partAprop}
f_n \left( s T \to \pi \right) = \begin{cases}
f_n ( s \to \pi T) + (1-q) f_n (s \to \pi), \qquad & \mbox{\textnormal{if} $\pi (i) >\pi (i+1)$}, \\
q f_n ( s \to \pi T), \qquad & \mbox{\textnormal{if} $\pi (i)<\pi (i+1)$}.
\end{cases}
\end{equation}

\end{proposition}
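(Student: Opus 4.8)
The plan is to prove \eqref{eq:partAprop} by induction on the number $n$ of crosses, peeling off the \emph{bottom} cross $w_{t_1,x_1}$ (the one applied first). Write $t_1 = (A,A+1)$ and let $f'(\,\cdot \to \cdot\,)$ denote the coefficients defined exactly as in \eqref{eq:model-def} but for the shorter product $w_{t_n,x_n}\cdots w_{t_2,x_2}$ of $n-1$ crosses. Expanding the first operator gives the one-step recursion
\begin{equation*}
f_n(r\to\pi) = \alpha(r)\, f'(r\to\pi) + \beta(r)\, f'(t_1 r\to\pi),
\end{equation*}
where $(\alpha(r),\beta(r)) = (1-x_1,x_1)$ if $r^{-1}(A) < r^{-1}(A+1)$ and $(1-qx_1,qx_1)$ otherwise. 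The base case $n=0$ is $f_0(s\to\pi) = \delta_{s,\pi}$: here $\delta_{sT,\pi}$ is nonzero only when $\pi = sT$, which (since $s(i)<s(i+1)$) forces $\pi(i)>\pi(i+1)$, and one checks both branches of \eqref{eq:partAprop} directly.

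For the inductive step the two structural facts I would isolate are: (i) left multiplication by $t_1$ and right multiplication by $T$ always commute, $t_1(rT) = (t_1 r)T$; and (ii) the ``type'' of the bottom cross — whether $(\alpha,\beta)$ equals $(1-x_1,x_1)$ or $(1-qx_1,qx_1)$ — is the \emph{same} for bottom colorings $s$ and $sT$, \emph{unless} lines $A,A+1$ carry exactly the colors $i$ and $i+1$, i.e.\ unless $\{s(i),s(i+1)\} = \{A,A+1\}$. Fact (ii) holds because $T$ merely interchanges colors $i$ and $i+1$, and any third color $c$ satisfies $c<i \Leftrightarrow c<i+1$ and $c>i\Leftrightarrow c>i+1$, so the order comparison at the cross is unaffected. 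This dichotomy organizes the induction into two cases.

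In the generic case $\{s(i),s(i+1)\}\neq\{A,A+1\}$, fact (ii) gives $\alpha(sT)=\alpha(s)$ and $\beta(sT)=\beta(s)$, while fact (i) gives $t_1(sT)=(t_1 s)T$. I would then apply the induction hypothesis to the two terms $f'(sT\to\pi)$ and $f'((t_1 s)T\to\pi)$; a short check (using $s(i)<s(i+1)$ together with $\{s(i),s(i+1)\}\neq\{A,A+1\}$) shows that $(t_1 s)(i) < (t_1 s)(i+1)$, so the hypothesis legitimately applies to the second term as well. Re-summing through the recursion for $f_n(s\to\cdot)$ then reproduces \eqref{eq:partAprop} for $f_n$ verbatim, in both parities of $\pi$, with no residual computation.

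The delicate case — and the main obstacle — is the special configuration $\{s(i),s(i+1)\}=\{A,A+1\}$, so that $s(i)=A$, $s(i+1)=A+1$. Here one checks directly that $t_1 s = sT$, and, crucially, the bottom cross has \emph{opposite} types for $s$ and $sT$: coloring $s$ gives weights $(1-x_1,x_1)$ while $sT$ gives $(1-qx_1,qx_1)$. Thus the clean re-summation of the generic case fails, and instead one must apply the induction hypothesis to $f'(sT\to\pi)$ and to $f'(sT\to\pi T)$ (both legitimate since $s(i)<s(i+1)$), substitute into the recursions for $f_n(sT\to\pi)$, $f_n(s\to\pi)$, and $f_n(s\to\pi T)$, and verify an algebraic identity in the weights. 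The point that makes it work is that the factors $1-qx_1$ and $qx_1$ coming from the flipped cross type combine with the factors $1-q$ and $q$ produced by the induction hypothesis so that the weight polynomials on the two sides coincide; this bookkeeping must be done separately for $\pi(i)>\pi(i+1)$ and $\pi(i)<\pi(i+1)$, using that $\pi\mapsto\pi T$ flips the parity and hence switches the two branches of the hypothesis.
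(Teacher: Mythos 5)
Your proposal is correct, but it runs the induction in the \emph{opposite direction} from the paper's, and the two decompositions are genuinely different. The paper peels off the top (last-applied) cross $w_{t_{n+1},x_{n+1}}$, obtaining the recursion \eqref{eq:fn-rec} in which the bottom permutation stays fixed while the top varies between $\pi$ and $\hat\pi = t_{n+1}\pi$; its case analysis is therefore governed by the top, via $\{A,A+1\}\cap\{\pi(i),\pi(i+1)\}$, producing four cases (1, 2, 3a, 3b), the last two requiring explicit weight computations. You peel off the bottom (first-applied) cross, so your recursion varies the bottom between $r$ and $t_1r$ with the top fixed, and your dichotomy is governed by the bottom: $\{s(i),s(i+1)\}$ versus $\{A,A+1\}$. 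This buys real economy: the standing hypothesis $s(i)<s(i+1)$ collapses the special configuration to the single case $s(i)=A$, $s(i+1)=A+1$ (there is no analogue of the 3a/3b split on your side), and your generic case absorbs the paper's Cases 1 and 2 in one stroke, with both branches of \eqref{eq:partAprop} passing through untouched. The price, which you correctly identify and pay, is twofold: the induction hypothesis is applied at a \emph{new} bottom $t_1s$, so you must verify $(t_1s)(i)<(t_1s)(i+1)$ — automatic in the paper, where the bottom is always $s$ or $sT$ — and in the special case you invoke the hypothesis at two different tops $\pi$ and $\pi T$, using that right multiplication by $T$ flips the branch. The weight identities you leave implicit do close: with $t_1 s = sT$ and $\pi(i)>\pi(i+1)$, both $f_n(sT\to\pi)$ and $f_n(s\to\pi T)+(1-q)f_n(s\to\pi)$ expand to
\begin{equation*}
(1-qx_1)\,f'(s\to\pi T) + \left(1-q+q^2x_1\right) f'(s\to\pi),
\end{equation*}
while for $\pi(i)<\pi(i+1)$ both sides equal $\left(q-q^2x_1\right)f'(s\to\pi T)+qx_1\,f'(s\to\pi)$ — the same polynomials, up to relabeling, that appear in the paper's Cases 3b and 3a respectively. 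In short: same inductive skeleton and comparable total algebra, but a dual and slightly leaner organization of the cases.
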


\begin{proof}
We prove this statement by induction in $n$.

\textit{Base case}: $n=0$. We have $f_0 (s \to s) =1$, and $f_0 (s \to \pi)=0$ for $\pi \ne s$. The statement is immediate.

\textit{Induction step}: Assume that the statement is proved for $n$, and let us prove it for $n+1$. Let $t_{n+1} = (A,A+1)$, and denote $x_{n+1}$ as $x$.
The definitions imply the following chain of equalities, where we use $\hat \pi := t_{n+1} \pi$:

\begin{multline*}
w_{t_{n+1}, x} \sum_{\pi \in S_N} f_n (s \to \pi) \pi = \sum_{\pi \in S_N} f_n (s \to \pi) \left( \mathbf{1}_{\pi^{(-1)}(A) < \pi^{(-1)}(A+1)} \left( (1-x) \pi + x \hat \pi \right) \right. \\ \left. + \mathbf{1}_{\pi^{(-1)}(A) > \pi^{(-1)}(A+1)} \left( (1-qx) \pi + q x \hat \pi \right)\right) = \sum_{\pi \in S_N} \pi \left( f_n (s \to \pi) (1-x) \mathbf{1}_{\pi^{(-1)}(A) < \pi^{(-1)}(A+1)} \right. \\ + \left. f_n (s \to \pi) (1-qx) \mathbf{1}_{\pi^{(-1)}(A) > \pi^{(-1)}(A+1)} + f_n (s \to \hat \pi) x \mathbf{1}_{\hat \pi^{(-1)}(A) < \hat \pi^{(-1)}(A+1)} \right. \\ + \left. f_n (s \to \hat \pi) q x \mathbf{1}_{\hat \pi^{(-1)}(A) > \hat \pi^{(-1)}(A+1)} \right).
\end{multline*}
Thus, we have
\begin{multline}
\label{eq:fn-rec}
f_{n+1} (s \to \pi) = f_n (s \to \pi) (1-x) \mathbf{1}_{\pi^{(-1)}(A) < \pi^{(-1)}(A+1)} + f_n (s \to \pi) (1-qx) \mathbf{1}_{\pi^{(-1)}(A) > \pi^{(-1)}(A+1)} \\ + f_n (s \to \hat \pi) x \mathbf{1}_{\hat \pi^{(-1)}(A) < \hat \pi^{(-1)}(A+1)}  + f_n (s \to \hat \pi) q x \mathbf{1}_{\hat \pi^{(-1)}(A) > \hat \pi^{(-1)}(A+1)} \\ = \mathbf{1}_{\pi^{(-1)}(A) < \pi^{(-1)}(A+1)} \left( f_n (s \to \pi) (1-x) + f_n (s \to \hat \pi) q x \right) \\ + \mathbf{1}_{\pi^{(-1)}(A) > \pi^{(-1)}(A+1)} \left( f_n (s \to \pi) (1-qx) + f_n (s \to \hat \pi) x \right).
\end{multline}
Exactly one of the two indicator functions in the last equation from \eqref{eq:fn-rec} is nonzero.
In order to check \eqref{eq:partAprop}, we need to track the expansions from \eqref{eq:fn-rec} of three terms $f_{n+1} \left( s T \to \pi \right)$, $f_{n+1} \left( s \to \pi T \right)$, and $f_{n+1} \left( s \to \pi \right)$. These expansions depend on the ordering of $(\pi^{(-1)} (A)$, $\pi^{(-1)} (A+1))$ and $((\pi T)^{(-1)} (A)$, $(\pi T)^{(-1)} (A+1))$.
After that, we will apply the induction hypothesis to the terms coming from the expansion of $f_{n+1} \left( s T \to \pi \right)$, and we also need to track which of two cases in \eqref{eq:partAprop} appears. For this we will need to know the ordering of $(\pi(i), \pi(i+1))$ and $(\hat \pi(i), \hat \pi(i+1))$.

Hence, we need to analyze several possible scenarios.

\smallskip

\textbf{Case 1:} Assume that $\{ A, A+1 \} \cap \{ \pi (i), \pi (i+1) \} = \varnothing$. Note that in this case $\pi^{-1} (A) = T \pi^{-1} (A)$, $\pi^{-1} (A+1) = T \pi^{-1} (A+1)$, $\pi(i) = \hat \pi(i)$, $\pi(i+1) = \hat \pi(i+1)$.

This implies that the three terms $f_{n+1} \left( s T \to \pi \right)$, $f_{n+1} \left( s \to \pi T \right)$, and $f_{n+1} \left( s \to \pi \right)$ have the same linear expansion in terms of $f_n$-functions. Moreover, by the induction hypothesis, the terms $f_n (s T \to \pi)$ and $f_n (s T \to \hat \pi)$ appearing from $f_{n+1} \left( s T \to \pi \right)$ are expressed in terms of $f_n (s \to \pi T)$, $f_n (s \to \pi)$ and $f_n (s \to \hat \pi T)$, $f_n (s \to \hat \pi)$, respectively, with the same coefficients. This gives the statement of the proposition.

\smallskip

\textbf{Case 2:} Assume that $|\{ A, A+1 \} \cap \{ \pi (i), \pi (i+1) \} | =1$.
While exact equalities as in Case 1 might not hold here, we claim that inequalities $\pi^{-1} (A) < \pi^{-1} (A+1)$ and $T \pi^{-1} (A) < T \pi^{-1} (A+1)$ are true or false simultaneously, and inequalities $\pi(i) < \pi(i+1)$ and $\hat \pi(i) < \hat \pi(i+1)$ are true or false simultaneously. Given this, we can claim, as in Case 1, that our expansions are the same and the statement reduces to the induction hypothesis.

In order to prove the claim about inequalities, note that in this case one of numbers $\pi^{-1} (A)$, $\pi^{-1} (A+1)$ is inside the set $\{ i, i+1 \}$, while the other is outside of this set. The application of $T$ does not change the latter one, and the former one is still inside the set $\{ i, i+1 \}$ after the application, which implies that  $\pi^{-1} (A) < \pi^{-1} (A+1)$ and $T \pi^{-1} (A) < T \pi^{-1} (A+1)$  are true or false simultaneously. Quite similarly, note that one of numbers $\pi(i), \pi(i+1)$ is inside the set $\{ A, A+1 \}$, while the other is outside of this set. The application of $(A,A+1)$ does not change the latter one, and the former one is still inside $(A,A+1)$. Thus, $\pi(i) < \pi(i+1)$ and $\hat \pi(i) < \hat \pi(i+1)$ are true or false simultaneously.

\smallskip

\textbf{Case 3a:} Assume that $A=\pi (i)$, $A+1=\pi (i+1)$. Then by a direct inspection one checks that $\pi^{-1} (A) < \pi^{-1} (A+1)$, $T \pi^{-1} (A) > T \pi^{-1} (A+1)$, 
and also $\hat \pi T = \pi$, $\pi T = \hat \pi$. Thus, from \eqref{eq:fn-rec} we have following expansions
\begin{align}
f_{n+1} (s \to \pi) = f_n (s \to \pi) (1-x) + f_n (s \to \hat \pi) qx, \qquad \\
\label{eq:cas3Arec}
f_{n+1} (s T \to \pi) = f_n (s T \to \pi) (1-x) + f_n (s T \to \hat \pi) qx, \\
f_{n+1} (s \to \pi T) = f_n (s \to \pi T) (1-q x) + f_n (s \to \hat \pi T) x.
\end{align}
Using inequalities $\pi (i) =A < A+1 = \pi (i+1)$, $\hat \pi (i) =A+1 > A = \hat \pi (i+1)$, by the induction hypothesis we have
\begin{equation*}
f_n ( s T \to \pi) = q f_n ( s \to \pi T), \qquad f_n ( s T \to \hat \pi) = f_n ( s \to \pi) + (1-q) f_n ( s \to \hat \pi).
\end{equation*}
Plugging this into \eqref{eq:cas3Arec} we get (using $\hat \pi = \pi T$)
\begin{multline*}
f_{n+1} (s T \to \pi) = (1-x) q f_n ( s \to \pi T) + qx \left( f_n ( s \to \pi) + (1-q) f_n ( s \to \hat \pi) \right) \\ = qx f_n ( s \to \pi) + (q - q^2 x) f_n ( s \to \pi T) = q f_{n+1} ( s \to \pi T),
\end{multline*}
as required.

\smallskip

\textbf{Case 3b:} Assume that $A=\pi (i+1)$, $A+1=\pi (i)$. This case is very similar to the previous one, yet we will give a full computation.
By a direct inspection one checks that $\pi^{-1} (A) > \pi^{-1} (A+1)$, $T \pi^{-1} (A) < T \pi^{-1} (A+1)$,
and also $\hat \pi T = \pi$, $\pi T = \hat \pi$. Thus, from \eqref{eq:fn-rec} we have following expansions
\begin{align}
f_{n+1} (s \to \pi) = f_n (s \to \pi) (1-q x) + f_n (s \to \hat \pi) x, \qquad \\
\label{eq:cas3Brec}
f_{n+1} (s T \to \pi) = f_n (s T \to \pi) (1-qx) + f_n (s T \to \hat \pi) x, \\
f_{n+1} (s \to \pi T) = f_n (s \to \pi T) (1-x) + f_n (s \to \hat \pi T) q x.
\end{align}
Using inequalities $\pi (i) =A+1 > A = \pi (i+1)$, $\hat \pi (i) =A < A+1 = \hat \pi (i+1)$, by the induction hypothesis we have
\begin{equation*}
f_n ( s T \to \pi) = f_n ( s \to \pi T) + (1-q) f_n ( s \to \pi) , \qquad f_n ( s T \to \hat \pi) = q f_n ( s \to \pi).
\end{equation*}
Plugging this into \eqref{eq:cas3Brec} we get
\begin{multline*}
f_{n+1} (s T \to \pi) = (1-qx) \left( f_n ( s \to \pi T) + (1-q) f_n ( s \to \pi) \right) + x q f_n ( s \to \pi) \\ = ( 1 - q + q^2 x) f_n ( s \to \pi) + (1-qx) f_n ( s \to \pi T) = f_{n+1} (s \to \pi T) + (1-q) f_{n+1} (s \to \pi).
\end{multline*}

We have exhausted all possible cases. Thus, the proposition is proved.


\end{proof}

\subsection{Symmetry theorem}

In this section we prove a somewhat surprising symmetry for coefficients $f_n \left( e \to \pi \right)$, where $e$ is the identity element of $S_N$. In addition to \eqref{eq:model-def}, let us define the coefficients $\{ \tilde f_n (s \to \pi) \}_{s,\pi \in S_N}$ via
\begin{equation}
\label{eq:model-def2}
w_{t_1, x_1} w_{t_{2}, x_{2}} \dots w_{t_n, x_n} s =: \sum_{\pi \in S_N} \tilde f_n (s \to \pi) \pi, \qquad s \in S_N.
\end{equation}
Note that the $w_{t,x}$ operators are applied in the opposite order here as compared to \eqref{eq:model-def}.

\begin{theorem}
\label{th1}
In notations and assumptions above, for any choice of transpositions $t_1, \dots, t_n$ and parameters $x_1, \dots, x_n$, we have
\begin{equation}
\label{eq:theor1}
f_n (e \to \pi) = \tilde f_n ( e \to \pi^{-1}).
\end{equation}
\end{theorem}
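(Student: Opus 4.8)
The plan is to prove \eqref{eq:theor1} by induction on $n$, peeling off the transposition $t_1$ --- which is applied \emph{first} in the definition \eqref{eq:model-def} of $f_n$ but \emph{last} in the definition \eqref{eq:model-def2} of $\tilde f_n$ --- and reducing everything to the same identity for the shorter data $t_2,\dots,t_n$. Write $g_{n-1}$ and $\tilde g_{n-1}$ for the coefficients defined exactly as $f_{n-1}$ and $\tilde f_{n-1}$ but for the transpositions $t_2,\dots,t_n$ with parameters $x_2,\dots,x_n$; the induction hypothesis is $g_{n-1}(e\to\sigma)=\tilde g_{n-1}(e\to\sigma^{-1})$ for all $\sigma$. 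The base case $n=0$ is immediate, since $f_0(e\to\pi)=\tilde f_0(e\to\pi^{-1})=\mathbf 1_{\pi=e}$.

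For the left-hand side I isolate $t_1$. Writing $t_1=(A,A+1)$ and using $e^{-1}(A)=A<A+1=e^{-1}(A+1)$, we have $w_{t_1,x_1}e=(1-x_1)e+x_1 t_1$, so applying the remaining operators gives
\begin{equation*}
f_n(e\to\pi)=(1-x_1)\,g_{n-1}(e\to\pi)+x_1\,g_{n-1}(t_1\to\pi).
\end{equation*}
Since $t_1=e\,t_1$ and $e(A)<e(A+1)$, Proposition \ref{prop:main} (applied to the system $g_{n-1}$ with $T=t_1$, $i=A$) rewrites $g_{n-1}(t_1\to\pi)$ in terms of $g_{n-1}(e\to\pi t_1)$ and $g_{n-1}(e\to\pi)$, with coefficients depending on whether $\pi(A)>\pi(A+1)$ or $\pi(A)<\pi(A+1)$ (these are exhaustive as $\pi$ is a bijection). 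Collecting terms yields, in the two cases,
\begin{equation*}
f_n(e\to\pi)=\begin{cases}(1-qx_1)\,g_{n-1}(e\to\pi)+x_1\,g_{n-1}(e\to\pi t_1),&\pi(A)>\pi(A+1),\\(1-x_1)\,g_{n-1}(e\to\pi)+qx_1\,g_{n-1}(e\to\pi t_1),&\pi(A)<\pi(A+1).\end{cases}
\end{equation*}

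For the right-hand side I peel off $t_1$ from the \emph{top} of \eqref{eq:model-def2}: $\tilde f_n(e\to\cdot)$ is obtained by applying $w_{t_1,x_1}$ to $\sum_\sigma \tilde g_{n-1}(e\to\sigma)\,\sigma$, so that a target permutation receives a diagonal contribution from $\sigma$ equal to it and an off-diagonal one from $t_1\sigma$, the two weights being governed by the orderings of $\sigma^{-1}(A)$ and $\sigma^{-1}(A+1)$. Evaluating at $\pi^{-1}$, the relevant $\sigma$ are $\pi^{-1}$ and $t_1\pi^{-1}$, and I invoke the induction hypothesis in the form $\tilde g_{n-1}(e\to\tau)=g_{n-1}(e\to\tau^{-1})$ to turn $\tilde g_{n-1}(e\to\pi^{-1})$ into $g_{n-1}(e\to\pi)$ and $\tilde g_{n-1}(e\to t_1\pi^{-1})$ into $g_{n-1}(e\to\pi t_1)$.

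The crux is then a direct comparison. The weights on the $\tilde f$ side are dictated by $\sigma^{-1}(A)$ versus $\sigma^{-1}(A+1)$ for $\sigma\in\{\pi^{-1},\,t_1\pi^{-1}\}$, whose inverses are $\pi$ and $\pi t_1$; translating these into the ordering of $\pi(A)$ versus $\pi(A+1)$ shows that the diagonal weight is $1-qx_1$ and the off-diagonal weight is $x_1$ exactly when $\pi(A)>\pi(A+1)$, and $1-x_1$, $qx_1$ respectively when $\pi(A)<\pi(A+1)$. This reproduces the two displayed expressions for $f_n(e\to\pi)$ term by term, closing the induction. I expect the only real obstacle to be precisely this bookkeeping with inverses: the case split on the $f$ side is phrased through $\pi$, while on the $\tilde f$ side it arises through $\sigma^{-1}$, so one must carefully verify that both case conditions and all four coefficients line up. No genuinely new idea beyond Proposition \ref{prop:main} appears to be needed.
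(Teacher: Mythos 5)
Your proof is correct and is essentially the paper's own argument in mirror image: the paper inducts by peeling off the last transposition $t_{n+1}$ (top of the $f$-tower, bottom of the $\tilde f$-tower) and invokes Proposition \ref{prop:main} on the $\tilde f$ side, whereas you peel off $t_1$ and invoke it on the $f$ side --- since the statement is symmetric under reversing the sequence of transpositions, these are the same proof up to relabeling. All four coefficients and both case conditions in your bookkeeping check out, so nothing further is needed.
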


\begin{figure}
\begin{center}
\noindent{\scalebox{0.6}{\includegraphics[height=7cm]{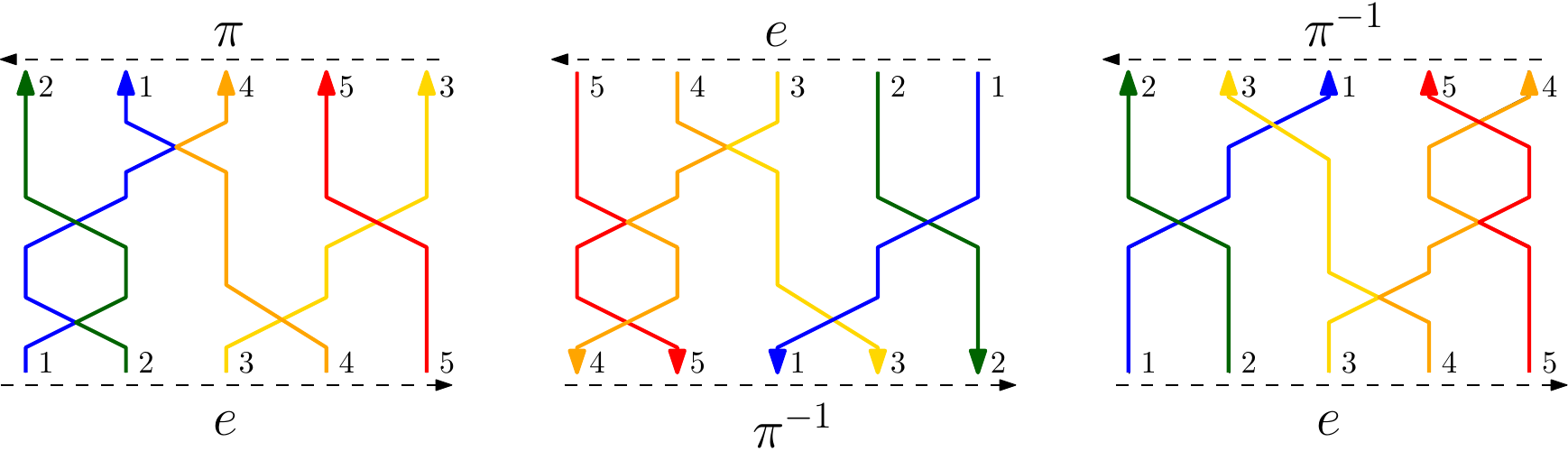}}}
\caption{The symmetry theorem. The middle picture is obtained from the leftmost one by changing directions of all arrows. The rightmost picture is the rotation of the middle picture by 180 degrees. \label{fig:5}}
\end{center}
\end{figure}

\begin{remark}
The quantities involved in the statement are illustrated in Figure \ref{fig:5}. Note that the statement has a nice pictorial interpretation: We just need to invert and rotate all arrows (or rather boundary conditions) on the picture.
\end{remark}

\begin{proof}

We prove the statement by induction in $n$. For $n=1$ the statement is tautological if $\pi$ is an identity or a transposition, and both sides of \eqref{eq:theor1} are equal to 0 for other $\pi$'s. Let us do $n \to n+1$ induction step. Let $t_{n+1} =: (A,A+1)$, denote $x_{n+1}$ as $x$, and let $\hat \pi := (A,A+1) \pi$.

Equation \eqref{eq:fn-rec} reads
\begin{multline}
\label{eq:fn-recDW}
f_{n+1} (e \to \pi) = \mathbf{1}_{\pi^{(-1)}(A) < \pi^{(-1)}(A+1)} \left( f_n (e \to \pi) (1-x) + f_n (e \to \hat \pi) q x \right) \\ + \mathbf{1}_{\pi^{(-1)}(A) > \pi^{(-1)}(A+1)} \left( f_n (e \to \pi) (1-qx) + f_n (e \to \hat \pi) x \right),
\end{multline}
which reduces $f_{n+1}$-coefficients to $f_n$'s. In order to do the same reduction for $\tilde f_{n+1}$, note that
\begin{equation*}
w_{t_{n+1}, x} e = (1- x) e + x (A,A+1).
\end{equation*}
Therefore, \eqref{eq:model-def2} implies
\begin{equation}
\label{eq:symm-backw1}
\tilde f_{n+1} \left( e \to \pi^{-1} \right) = (1-x) \tilde f_{n} \left( e \to \pi^{-1} \right) + x \tilde f_{n} \left( (A,A+1) \to \pi^{-1} \right).
\end{equation}
By Proposition \ref{prop:main} we have
\begin{multline*}
\tilde f_{n} \left( (A,A+1) \to \pi^{-1} \right) = \mathbf{1}_{\pi^{-1} (A) > \pi^{-1} (A+1)} \left( \tilde f_n (e \to \pi^{-1} (A,A+1)) + (1-q) \tilde f_n (e \to \pi^{-1}) \right) \\ + \mathbf{1}_{\pi^{-1} (A) < \pi^{-1} (A+1)} q \tilde f_n (e \to \pi^{-1} (A,A+1)).
\end{multline*}
Plugging this into \eqref{eq:symm-backw1}, we obtain
\begin{multline}
\label{eq:tildeFn-rec}
\tilde f_{n+1} \left( e \to \pi^{-1} \right) = \mathbf{1}_{\pi^{-1} (A) < \pi^{-1} (A+1)} \left( (1-x) \tilde f_{n} \left( e \to \pi^{-1} \right) + q x \tilde f_n (e \to \pi^{-1} (A,A+1)) \right) \\
+ \mathbf{1}_{\pi^{-1} (A) > \pi^{-1} (A+1)} \left( (1- q x) \tilde f_{n} \left( e \to \pi^{-1} \right) + x \tilde f_n (e \to \pi^{-1} (A,A+1)) \right).
\end{multline}
Comparing \eqref{eq:tildeFn-rec} with \eqref{eq:fn-recDW} and noting that $\hat \pi^{-1} = (A,A+1) \pi^{-1}$, we reduce the induction step to the induction hypothesis.

\end{proof}

\section{Multi-colored ASEP and stochastic six-vertex model}
\label{sec:symmAsepGen}

We start with a description of a common setting for a colored (or multi-species, or multi-type) version of the asymmetric simple exclusion process (ASEP) and the stochastic six vertex model (S6V). We consider an interacting particle system in which particles live on the integer lattice $\Z$ and each integer location contains exactly one particle. As parameters, we have an asymmetry parameter $q \in [0,1]$, and the set of colors which we will fix as $\Z \cup \{ +\infty \}$.

A particle configuration is a map $\eta: \Z \to \Z \cup \{ +\infty \}$, which can be regarded as the information that a particle at $z \in \Z$ has color $\eta(z)$. Let $\mathfrak C$ be the set of all configurations. For a transposition $(z,z+1)$ with $z, z+1 \in \Z$, let $\sigma_{(z,z+1)}: \mathfrak C \to \mathfrak C$, be a swap operator defined by
$$
(\sigma_{(z,z+1)} \eta) (i) =
\begin{cases}
\eta(i+1), \qquad & i=z, \\
\eta(i-1), \qquad & i=z+1, \\
\eta(i), \qquad & i \in \Z \backslash \{ z,z+1 \}.
\end{cases}
$$

Define a random \textit{asymmetric swap} $W_{(z,z+1),x}$, $z \in \Z$, $x \in [0;1]$, which assigns to each $\eta$ one of the two configurations $\eta$, $\sigma_{(z,z+1)} (\eta)$ with certain probabilities. Namely, if $\eta(z)<\eta(z+1)$, then $W_{(z,z+1),x} (\eta) = \eta$ with probability $1-x$, and $W_{(z,z+1),x} (\eta) = \sigma_{(z,z+1)} (\eta)$ with probability $x$. If $\eta(z) > \eta(z+1)$, then $W_{(z,z+1),x} (\eta) = \eta$ with probability $1-q x$, and $W_{(z,z+1),x} (\eta) = \sigma_{(z,z+1)} (\eta)$ with probability $q x$. See Figure \ref{Fig1-intro} for an illustration of $W_{(z,z+1),x}$. Finally, if $\eta(z) = \eta(z+1)$, then $W_{(z,z+1),x} (\eta) = \eta$ with probability 1.


In order to define an inhomogeneous continuous time ASEP, let us fix a collection of functions $\{ r(z,t) \}_{z \in \Z, t \in \R_{\ge 0}}$ as parameters of inhomogeneity. We assume that $0 \le r(z,t) \le const$ for a constant which does not depend on $z$ and $t$, and that $r(z,t)$ is a piecewise continuous function of $t$ for any fixed $z$.
Consider a collection of independent Poisson processes $\{ \mathcal P (z) \}_{z \in \Z}$, where $\mathcal P (z)$ has a state space $\R_{\ge 0}$ and rate $r(z,t)$.

Let $\eta_0$ be a (either deterministic or random) particle configuration which plays the role of an initial condition. We define a continuous-time stochastic evolution $\{ \eta_t \}_{t \in \R_{\ge 0}}$, $\eta(t) \in \mathfrak C$, by applying $W_{(z,z+1),1}$ to $\eta_{t-0}$ whenever we obtain a point from $\mathcal P (z)$. It is readily shown via standard techniques that under our assumptions such a random process is well-defined, see \cite{H1}, \cite{H2}, \cite{H3}, \cite{L4}.

In the ASEP case, we will be particularly interested in the following two processes. In the first one, we start with the initial configuration $\eta^{ASEP;\bm{gen}}_0 (z)=z$, $z \in \Z$. Next, we apply the asymmetric swaps $W_{s_1,1}$, $W_{s_2,1}$, ..., $W_{s_k,1}$, where $\{s_i \}_{i=1}^k$ is a finite sequence of transpositions of neighboring integers ($W_{s_1,1}$ is the first swap to be applied). After it, we start the continuous time process described in the previous paragraph with general inhomogeneous functions $\{ r_{\bm{gen}} (z,t) \}$. Denote by $\eta^{ASEP;\bm{gen}}_{s_1, \dots, s_k;t}$ the (random) configuration after time $t$ (note that the random configuration also depends on $q$ and $\{ r_{\bm{gen}} (z,t) \}$, but we omit this in notations).

Next, we want to construct the second process with the use of the same data $s_1, \dots, s_k, \{ r_{\bm{gen}} (z,t) \}$.
Let us fix a positive real $\bm{\tau}$. Let us time-reflect inhomogeneity parameters on $\left[ 0;\tau \right]$: Set $\hat r_{\bm{gen}} (z,t) :=  r_{\bm{gen}} (z, \bm{\tau}-t)$, $0 \le t \le \bm{\tau}$.
Then, take the initial configuration $\hat \eta^{ASEP;\bm{gen};\bm{\tau}}_0 (z)=z$, $z \in \Z$, and start a continuous time process from it with the use of the new inhomogeneity parameters $\{ \hat r_{\bm{gen}} (z,t) \}$. After time $t \le \bm{\tau}$, we apply to the resulting (random) configuration the asymmetric swaps in the order $W_{s_k,1}$, $W_{s_{k-1},1}$, ..., $W_{s_1,1}$. Denote by $\hat \eta_{t; s_k, \dots, s_1}^{ASEP;\bm{gen};\bm{\tau}}$ the resulting (random) configuration. Note that $\hat \eta_{t; s_k, \dots, s_1}^{ASEP;\bm{gen};\bm{\tau}}$ is a (random) bijection between integers, since both colors and positions are labelled by integers and there is exactly one particle colored by any given color in our initial configuration. Thus, it is possible to define $\mathrm{inv}\left(\hat \eta_{t; s_k, \dots, s_1}^{ASEP;\bm{gen};\bm{\tau}} \right)$ as the inverse of this bijection.

The process $\hat \eta_{t; s_k, \dots, s_1}^{ASEP;\bm{gen};\bm{\tau}}$ is just the time reversion of the process $\eta^{ASEP;\bm{gen}}_{s_1, \dots, s_k;t}$: We reverse in time the inhomogeneity parameters and apply the finite collection of swaps in the end instead of the beginning. It turns out that this change of the time direction leads to the change of roles between colors and positions.

\begin{theorem}
\label{th:genASEP}
For any $s_1, \dots, s_k$, $t$, $q$, and $r_{\bm{gen}}(z,t)$ satisfying assumptions above, the configuration $\eta^{ASEP;\bm{gen}}_{s_1, \dots, s_k;t}$ has the same distribution as $\mathrm{inv}\left( \hat \eta_{t; s_k, \dots, s_1}^{ASEP;\bm{gen};t} \right)$.

In other words, the distribution of colors of particles that are placed in positions $I \subset \Z$ after $\eta^{ASEP;\bm{gen}}_{s_1, \dots, s_k;t}$ is the same as the distribution of positions of particles with colors from $I$ that underwent $\hat \eta_{t; s_k, \dots, s_1}^{ASEP;\bm{gen};t}$.
\end{theorem}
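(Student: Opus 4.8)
The plan is to lift the purely algebraic identity of Theorem \ref{th1} to the probabilistic statement by recognizing the operators $w_{t,x}$ of Section \ref{subsec:perm1} as the one-step Markov transition operators of the asymmetric swaps $W_{(z,z+1),x}$, approximating the continuous-time inhomogeneous evolution by finite sequences of such swaps, and localizing to a finite lattice so that the finite combinatorial identity applies. For the dictionary, recall that under the encoding of Section \ref{subsec:perm1} a permutation $\sigma$ records positions of colors, so a configuration $\eta$ (position-to-color) corresponds to $\sigma=\eta^{-1}$; then the branching condition $\sigma^{-1}(A)<\sigma^{-1}(A+1)$ is exactly $\eta(A)<\eta(A+1)$, and $w_{t,x}(\sigma)=(1-x)\sigma+x\,(A,A+1)\sigma$ matches the action of $W_{(A,A+1),x}$ on the configuration. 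For $x\in[0,1]$ and $q\in[0,1]$ the operator $w_{t,x}$ has nonnegative coefficients summing to one, i.e. it is stochastic, which is precisely why the combinatorial theorem is needed with general parameters $x_i$ rather than only $x_i\equiv 1$. Consequently $f_n(e\to\pi)$ (resp. $\tilde f_n(e\to\pi)$) is the probability that the color-to-position permutation equals $\pi$ after applying $W_{t_1,x_1},\dots,W_{t_n,x_n}$ (resp. the reversed order $W_{t_n,x_n},\dots,W_{t_1,x_1}$) to the packed configuration.

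First I would settle the statement for a purely \emph{finite} sequence of swaps on a finite lattice. Writing $\eta^{fwd}$ for the position-to-color configuration produced by the forward swaps, its color-to-position permutation is $(\eta^{fwd})^{-1}$, so $\mathrm{Prob}[(\eta^{fwd})^{-1}=\pi]=f_n(e\to\pi)$; writing $\hat\eta$ for the reversed arrangement, $\mathrm{Prob}[\mathrm{inv}(\hat\eta)=\pi]=\tilde f_n(e\to\pi)$. Then for any target bijection $\rho$ one has $\mathrm{Prob}[\eta^{fwd}=\rho]=f_n(e\to\rho^{-1})$, while $\mathrm{Prob}[\mathrm{inv}(\hat\eta)=\rho]=\tilde f_n(e\to\rho)$, and Theorem \ref{th1} applied with $\pi=\rho^{-1}$ gives $f_n(e\to\rho^{-1})=\tilde f_n(e\to\rho)$. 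Thus $\eta^{fwd}$ and $\mathrm{inv}(\hat\eta)$ have the same law whenever only finitely many swaps are involved.

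Second I would pass to continuous time. Discretize $[0,t]$ into $M$ intervals and, on each bond and interval, apply $W_{(z,z+1),x}$ with $x$ equal to the probability that $\mathcal P(z)$ fires there, so $x\approx r(z,\cdot)\,t/M$. The key elementary identity is that the composite ``fire with probability $p$, then apply $W_{(z,z+1),1}$'' equals $W_{(z,z+1),p}$: in the ordered case the swap probability is $p\cdot 1=p$ and in the reversed case $p\cdot q=qp$, matching the definition of $W_{(z,z+1),p}$, with nothing happening for equal colors. Hence each discretized forward ASEP is a finite swap sequence, and by the standard convergence of these Bernoulli-clock chains to the Poisson-clock process (using piecewise continuity of $r$) it converges to $\eta^{ASEP;\bm{gen}}_{s_1,\dots,s_k;t}$. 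The reversed ASEP discretizes identically but with rates $\hat r(z,s)=r(z,t-s)$; its swap multiset coincides with the forward one while the swaps are applied in the reverse order, and the terminal application of $W_{s_k,1},\dots,W_{s_1,1}$ reproduces exactly the reversed tail of the $\tilde f_n$ arrangement. The mismatch from evaluating $r$ at interval endpoints $t_j$ versus $t_{M-j}$ vanishes as $M\to\infty$. Together with the finite identity of the previous paragraph, this yields the statement in the limit at each fixed finite volume.

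Finally I would localize to a finite lattice. Since all swaps are nearest-neighbor, any finite-dimensional marginal of the bijection — the colors occupying finitely many positions, or the positions of finitely many colors — depends, up to an error tending to $0$, only on a bounded space-time region and hence on finitely many swaps; this is the finite-speed-of-propagation / graphical-construction estimate that also underlies the well-posedness cited in \cite{H1,H2,H3,L4}. On such a window the dynamics lives in some $S_N$, the finite identity applies, and letting the window grow and $M\to\infty$ gives Theorem \ref{th:genASEP}. The main obstacle I anticipate is analytic bookkeeping rather than a new idea: one must verify that the exact finite identity survives both the continuous-time ($M\to\infty$) and infinite-volume limits, which requires the localization estimate and the careful matching of the reversed discretization with the time-reversed rates. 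The algebraic heart — that reversing the order of swaps is compensated by inverting the permutation — is entirely supplied by Theorem \ref{th1}.
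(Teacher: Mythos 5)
Your reduction to the finite combinatorial identity is correct, and your dictionary (configurations $\eta$ correspond to $\sigma=\eta^{-1}$, and $\mathrm{Prob}[\eta^{fwd}=\rho]=f_n(e\to\rho^{-1})=\tilde f_n(e\to\rho)=\mathrm{Prob}[\mathrm{inv}(\hat\eta)=\rho]$ via Theorem \ref{th1} with $\pi=\rho^{-1}$) is exactly the translation the paper uses implicitly. Where you genuinely diverge is the passage from finitely many swaps to continuous time: the paper does not discretize at all. It conditions on the realization of the Poisson clocks and invokes the time-reflection symmetry of the inhomogeneous one-dimensional Poisson process -- the point configuration on $[0,t]$ with rates $r(z,s)$, read backwards, has exactly the law of the clocks with rates $\hat r(z,s)=r(z,t-s)$ -- so that Theorem \ref{th1} applies pathwise to a finite swap sequence with all $x_i=1$; in particular your composite identity (``fire with probability $p$, then $W_{\cdot,1}$, equals $W_{\cdot,p}$'') and the general-$x$ form of Theorem \ref{th1} are not needed for the ASEP statement (general $x$ is what the six-vertex application consumes). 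The conditioning route eliminates your entire $M\to\infty$ error analysis, including the fussiest item you file under ``bookkeeping'': in a discrete time slot, adjacent bonds can fire simultaneously, the corresponding $W$'s do not commute, and the reversed chain must reverse the within-slot order as well -- all controllable, but requiring estimates that the a.s.-distinct Poisson times make vacuous. Similarly, for infinite volume the paper uses an exact rather than approximate localization: since the rates are uniformly bounded, every bond is swap-free on $[0,t]$ with probability bounded below, so a.s. there are swap-free bonds on both sides of any finite window, the infinite permutation factors \emph{exactly} into commuting finite blocks, and the probabilities of these cutting events agree for the two processes because $\int_0^t r(z,s)\,ds=\int_0^t \hat r(z,s)\,ds$; your finite-speed-of-propagation version reaches the same end through an extra limit. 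In sum, your scheme is a valid alternative proof, at the cost of genuine (if standard) approximation arguments that the paper's conditioning-plus-reflection argument renders unnecessary.
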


\begin{proof}
The argument is similar to \cite[Proof of Theorem 1.4]{AmAnV}.

Note that for any $z \in \Z$, there is a probability $p>0$ such that with probability $p$ no asymmetric swaps were applied to $z, z+1$ (here we use that all the rate functions are bounded by the same constant). Further, these probabilities are the same for the direct and reversed processes.
Each such $z$ splits $\Z$ into two parts which do not interact with each other during the process.
Therefore, with probability 1 the infinite permutation $\eta^{ASEP;\bm{gen}}_{s_1, \dots, s_k;t}$ is a product of finite commuting permutations that act on disjoint subsets of $\Z$, and for any such finite permutation the claim follows from Theorem \ref{th1} and time-reflection symmetry of the inhomogeneous one dimensional Poisson process.

\end{proof}

Let us formulate a version of this theorem for a S6V (stochastic six vertex) model. As inhomogeneity parameters, we consider the set $\{ x(z,T) \}$ where $z \in \Z$ is viewed as position, $T \in \Z_{\ge 0}$ is viewed as discrete time, the parameter $x(z,T)$ is relevant for the process iff $z \equiv T (\mbox{mod } 2)$, and $0 \le x(z,T) <1$.

Let $\eta(0)$ be (either deterministic or random) particle configuration which plays the role of an initial condition of the S6V model. We define the discrete-time stochastic process $\{ \eta(T) \}_{T \in \Z_{\ge 0}}$, $\eta(T) \in \mathfrak C$, inductively, by setting $\eta(T) = \prod_{z \in 2 \Z} W_{z,x(z,T)} \eta(T-1)$ for even $T$, and $\eta(T) = \prod_{z \in 2 \Z+1} W_{z,x(z,T)} \eta(T-1)$ for odd $T$. Note that the set $\{ W_{z,x(z,T)} \}_{z \in 2 \Z}$ (and, analogously, the set $\{ W_{z,x(z,T)} \}_{z \in 2 \Z+1}$) consists of mutually commuting operators, so we can apply them in any order.

As before, we will be particularly interested in two processes. In the first one, we start with an initial configuration $\eta^{6V;\bm{gen}}_0 (z)=z$, $z \in \Z$. Next, we apply the asymmetric swaps $W_{s_1,1}$, $W_{s_2,1}$, ..., $W_{s_k,1}$, where $\{s_i \}_{i=1}^k$ is a finite sequence of
transpositions of neighboring integers. After it, we start the discrete time process described in the previous paragraph. Denote by $\eta^{6V;\bm{gen}}_{s_1, \dots, s_k;T}$ the (random) configuration after time $T$. For convenience, we will assume that $T$ is odd (though the assumption can be easily omitted if needed).

For the second process we consider the same functions $\{ x(z,T) \}$ and integers $s_1, \dots, s_k$ as for the first process. Let us fix an odd $\bm{\tau} \in \Z_{>0}$, and consider parameters $ \hat x(z,\bm{\tau}) := x(z,\bm{\tau} - T)$, $0 \le T \le \bm{\tau}$, which correspond to the time reversion. We start the process with an initial configuration $\hat \eta_{T; s_k, \dots, s_1}^{6V;\bm{gen};\bm{\tau}} (z)=z$, $z \in \Z$. Next, we run the discrete time process with reversed time inhomogeneity parameters. After the time $\bm{\tau}$ we apply to the resulting configuration the asymmetric swaps $W_{s_k,1}$, $W_{s_{k-1},1}$, ..., $W_{s_1,1}$. Let us denote by $\hat \eta_{\tau; s_k, \dots, s_1}^{6V;\bm{gen};\bm{\tau}}$ the (random) configuration that we obtain after that.

\begin{theorem}
\label{th:genS6V}
For any $s_1, \dots, s_k$, $T$, $q$, and $\{ x(z,T) \}$ satisfying assumptions above, the configuration $\eta^{6V;\bm{gen}}_{s_1, \dots, s_k;T}$ has the same distribution as $\mathrm{inv} \left( \hat \eta_{T; s_k, \dots, s_1}^{6V;\bm{gen};T} \right)$.
\end{theorem}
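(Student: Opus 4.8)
The plan is to mirror the proof of Theorem \ref{th:genASEP}, reducing everything to the combinatorial identity of Theorem \ref{th1}, the one genuinely new ingredient being the passage from a finite lattice to $\Z$ in discrete time.

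First I would observe that the direct process $\eta^{6V;\bm{gen}}_{s_1,\dots,s_k;T}$ is, by definition, produced by applying a single ordered sequence of asymmetric swaps to the packed configuration: first the $k$ swaps $W_{s_1,1},\dots,W_{s_k,1}$, and then, for each discrete time step $1\le j\le T$, the commuting product $\prod_z W_{z,x(z,j)}$ over the bond-sublattice of the parity of $j$. Since the swaps within one layer commute, this is literally an instance of the operators $w_{t_i,x_i}$ of Section \ref{subsec:perm1} applied in the order of \eqref{eq:model-def}. The reversed process $\hat\eta^{6V;\bm{gen};T}_{T;s_k,\dots,s_1}$ runs the S6V layers in reverse time order with the time-reflected parameters $\hat x$, and then applies the finite swaps last and in reverse order, which is precisely the order of \eqref{eq:model-def2}. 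Here the hypothesis that $T$ (hence $\bm{\tau}=T$) is odd is exactly what forces the bond-sublattice of the $j$-th reversed layer to coincide with that of the direct layer it reverses, so that the two processes apply the very same sequence of operators in opposite orders. With this dictionary, and identifying a configuration-bijection with a permutation (relabeling colors in an order-preserving way) and $\mathrm{inv}$ with $\pi\mapsto\pi^{-1}$ exactly as in the proof of Theorem \ref{th:genASEP}, the desired identity on any finite lattice is nothing but $f_n(e\to\pi)=\tilde f_n(e\to\pi^{-1})$ from Theorem \ref{th1}.

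It then remains to transfer the finite-lattice identity to $\Z$, and this is where the discrete-time argument departs from the continuous-time one. In the ASEP proof one uses that each bond is left untouched with positive probability and that these events are independent across bonds, which severs $\Z$ into finite non-interacting intervals. In discrete time this is unavailable: a swap is attempted at every relevant bond at every step, so no bond is frozen, and the event that a bond is never actually crossed is not a function of that bond's randomness alone. The replacement I would use is finite speed of propagation: coupling all swap decisions through i.i.d.\ uniforms, one checks by induction on $j$ that the color at a position $z$ after $T$ steps depends only on the initial colors and swap outcomes in bonds lying in $[z-T,z+T]$ (together with the $s_i$, whose support sits in some fixed $[-M,M]$). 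Consequently, for any $N$, the restriction of $\eta^{6V;\bm{gen}}_{s_1,\dots,s_k;T}$ to $[-N,N]$ agrees, under this coupling, with the corresponding restriction of the process run on the finite interval $[-(N+T+M),\,N+T+M]$ with no swaps across its boundary bonds; the same localization computes the colors in $[-N,N]$ of $\mathrm{inv}(\hat\eta^{6V;\bm{gen};T}_{T;s_k,\dots,s_1})$.

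Thus each finite-dimensional marginal of both sides is computed by a genuinely finite S6V process, to which the second paragraph and Theorem \ref{th1} apply verbatim, giving equality of that marginal; since this holds for all $N$ and a law on configurations is determined by its finite-dimensional marginals, the two configurations have the same distribution. The main obstacle is precisely this infinite-volume reduction: one must replace the independent ``untouched bond'' cut points of the ASEP proof by the finite-speed localization above. (Alternatively, one can recover a decomposition into finite non-interacting pieces in the style of Theorem \ref{th:genASEP} by noting that the event of no crossing at a given bond during $[0,T]$ is measurable with respect to the swaps in a bounded neighborhood of that bond, so that such events at bonds spaced more than $2T$ apart are independent, and then proceeding as in the ASEP case.)
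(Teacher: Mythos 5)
Your proposal is correct, and its finite-volume core coincides with the paper's: both unfold the two processes into a single sequence of $w_{t,x}$-operators applied in opposite orders (the oddness of $T$ making the parity sequence of the layers a palindrome, so that the reversed dynamics is again a legitimate S6V evolution — you identified the role of this hypothesis correctly) and then quote Theorem \ref{th1}. Where you genuinely diverge is the infinite-volume step, and your stated reason for diverging is mistaken. You claim the ASEP cut-point argument is ``unavailable'' in discrete time because no bond is frozen and the no-crossing event ``is not a function of that bond's randomness alone.'' But couple every update through an independent uniform $U_{z,t}$, performing the swap iff $U_{z,t}<x(z,t)$ (colors increasing) or $U_{z,t}<q\,x(z,t)$ (colors decreasing). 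Since $q\le 1$, on the event $\bigl\{U_{z,t}\ge x(z,t)\ \text{for all relevant}\ t\le T\bigr\}$ the bond $(z,z+1)$ is never crossed whatever the colors are; this event is measurable with respect to that single bond's uniforms alone, is independent across bonds, and has probability $\prod_{t}\bigl(1-x(z,t)\bigr)>0$ because $T$ is finite and $x(z,t)<1$. This is exactly the paper's (two-line) proof of Theorem \ref{th:genS6V}: the hypothesis $x(z,T)<1$ guarantees that any attempted swap leaves the configuration unchanged with positive probability, hence for every $z$ the particles of colors $\le z$ stay at positions $\le z$ with positive probability, and the rest is verbatim the almost-sure decomposition into finite non-interacting blocks from the proof of Theorem \ref{th:genASEP}.

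That said, your finite-speed-of-propagation localization is a valid replacement and is in one respect more robust: it needs no frozen bonds at all, hence no lower bound on the freezing probabilities. The cut-point route needs, strictly speaking, $\sum_z \prod_t \bigl(1-x(z,t)\bigr)=\infty$ in both directions (second Borel--Cantelli) to guarantee almost surely infinitely many cuts, which the pointwise assumption $x(z,T)<1$ does not by itself enforce if $x(z,t)\to 1$ as $|z|\to\infty$; your argument is insensitive to this. The price is the marginal-by-marginal bookkeeping on the buffered interval $[-(N+T+M),\,N+T+M]$ together with the (routine) fact that a law on configurations is determined by its finite-dimensional marginals, and one should check, as you implicitly do, that truncation preserves the exact order-reversal correspondence between the two operator sequences. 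Your closing parenthetical ($2T$-neighborhood measurability and independence of no-crossing events at spacing $>2T$) also works, but is strictly more complicated than the one-bond coupling the paper actually uses.
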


\begin{proof}
We required that $x(z,T)<1$; therefore, any asymmetric swap applied to any configuration does not change it with positive probability. Thus, for any $z \in \Z$ there is a probability $p>0$ such that with probability $p$ the particles with colors $\le z$ stay at positions $ \le z$ during the evolution. The rest of the proof is analogous to Theorem \ref{th:genASEP}.
\end{proof}

\begin{remark}
Theorems \ref{th:genASEP} and \ref{th:genS6V} involve two processes which are generally quite different because of the change of inhomogeneity parameters. However, if we assume, for example, in Theorem \ref{th:genASEP} that $r(z,t)$ does not depend on $t$ for all $z \in \Z$, then the processes will coincide --- they are both the ASEP on $\Z$ with the same space inhomogeneities.
\end{remark}

\begin{remark}

Theorems \ref{th:genASEP} and \ref{th:genS6V} involve a combination of finitely many deterministic swaps and the usual ASEP / S6V process after them. It is easy to formulate similar, but more general statements with basically the same proofs. For example, one can perform deterministic swaps not only in the beginning / end, but also in several places in the middle; one can consider discrete time sequential or parallel updates instead of continuous time in ASEP; one can consider higher spin stochastic vertex models instead of S6V (via fusion), etc. We restricted ourselves to the versions which are sufficient for applications below.
\end{remark}

\section{Second class particles in inhomogeneous ASEP}

The most studied versions of the multi-colored ASEP appear when the particles are colored in two or three colors only. We will use the sets of colors $\{1, +\infty \}$ for the former case, and $\{1,2, +\infty \}$ for the latter case. In such a situation one typically addresses particles with color $1$ as the first class particles, particles with color $2$ as the second class particles, and particles with color $+\infty$ as holes. Let us show now that Theorem \ref{th:genASEP} (which involves infinitely many colors in an essential way) allows to obtain new facts for these models.

All our applications of Theorems \ref{th:genASEP} and \ref{th:genS6V} will use the same simple idea which allows to reduce the number of colors in the system. Instead of distinguishing infinitely many colors from $\Z$, let us choose arbitrary integers $\bm{A} \le \bm{B}$ and consider particles with colors $\le \bm{A}$ as the first class particles, particles with colors strictly between $\bm{A}$ and $\bm{B}$ as the second class particles, and particles with colors $\ge \bm{B}$ as holes (see Figure \ref{fig:7}). As readily visible from definitions, this provides a coupling between a $\Z$-color and a three-color (or two-color if $\bm{A}=\bm{B}$) process under which the set occupied by the first class particles evolves in the same way as the set occupied by the particles with colors $\le \bm{A}$, and the analogous property holds for the second class particles and holes.

\begin{figure}
\begin{center}
\noindent{\scalebox{0.2}{\includegraphics[height=7cm]{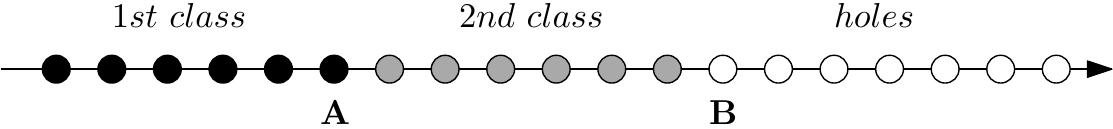}}}
\caption{ Initial conditions of a three color process obtained by ``forgetting'' a part of information about a multi-color process.
 \label{fig:7}}
\end{center}
\end{figure}

In this section we address the behavior of a single second class particle in the three-color inhomogeneous ASEP started with the step initial condition.

Let us define two continuous time ASEP processes.
As the first one, consider a three-color ASEP with the initial configuration
\begin{equation}
\eta_0^{2;inh} (z) = \begin{cases}
1, \qquad & z <0, \\
2, \qquad & z=0, \\
+\infty, \qquad & z> 0.
\end{cases}
\end{equation}
As inhomogeneity parameters, we choose arbitrary functions $\{ r_{inh} (z,t) \}$ satisfying general conditions of Section \ref{sec:symmAsepGen}. 
We are interested in the behavior of the unique second class particle in the inhomogeneous continuous time ASEP process. Denote its position at time $t$ as $\mathfrak f_2 (t)$.

For the second ASEP process, we fix $x \in \R$ and $\tau \in \R_{\ge 0}$, and consider a two-color ASEP with the (shifted) step initial condition
\begin{equation}
\eta_0^{inh;x;\tau} (z) = \begin{cases}
1, \qquad & z \le x, \\
+\infty, \qquad & z> x,
\end{cases}
\end{equation}
and inhomogeneity parameters $\hat r_{inh} (z,t) := r_{inh} (z,\tau-t)$. This process is defined up to time $\tau$.
Let $\mathfrak{p}_1^x (\tau) := \mathrm{Prob} (\hat \eta_{\tau}^{inh;x;\tau} (0) = 1)$ be the density function of the (first class) particles in this process at position 0 and time $\tau$.

\begin{theorem}
\label{th:inh2cl}
In the notations above, for any $\tau \ge 0$ we have
\begin{equation}
\label{eq:inh2cl}
\mathrm{Prob} ( \mathfrak f_2 (\tau) \le x) = \mathfrak{p}_1^x (\tau).
\end{equation}
\end{theorem}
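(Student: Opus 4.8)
The plan is to realize \emph{both} the three-color process defining $\mathfrak f_2$ and the two-color process defining $\mathfrak p_1^x$ as color-forgetting projections of one and the same $\Z$-color packed ASEP, and then to feed the color-position symmetry of Theorem \ref{th:genASEP} (applied with no initial swaps, i.e. $k=0$) into this picture. Concretely, I would let $\eta_\tau$ denote the configuration of the $\Z$-color packed ASEP run with inhomogeneity $r_{inh}(z,t)$ up to time $\tau$, viewed as a random bijection $z\mapsto\eta_\tau(z)$ that assigns to each position its color. Forgetting colors with the thresholds $\bm A=-1$, $\bm B=1$ -- colors $\le -1$ become first class, color $0$ becomes the unique second class particle, and colors $\ge 1$ become holes -- reproduces exactly the initial condition $\eta_0^{2;inh}$ and, by the coupling discussed at the start of this section, the same three-color dynamics. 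Under this coupling the second class particle is the particle of color $0$, so its position is $\mathfrak f_2(\tau)=\eta_\tau^{-1}(0)$.

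Next I would invoke the symmetry. Theorem \ref{th:genASEP} with $k=0$ asserts that $\eta_\tau$ has the same distribution as $\mathrm{inv}(\hat\eta_\tau)=\hat\eta_\tau^{-1}$, where $\hat\eta_\tau$ is the $\Z$-color packed ASEP run with the time-reversed rates $\hat r_{inh}(z,t)=r_{inh}(z,\tau-t)$ up to time $\tau$. Since taking the inverse of a bijection is a measurable involution, this is equivalent to $\eta_\tau^{-1}\stackrel{d}{=}\hat\eta_\tau$; evaluating both sides at $0$ then yields $\mathfrak f_2(\tau)=\eta_\tau^{-1}(0)\stackrel{d}{=}\hat\eta_\tau(0)$, the color of the particle sitting at position $0$ in the time-reversed process. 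Hence $\mathrm{Prob}(\mathfrak f_2(\tau)\le x)=\mathrm{Prob}(\hat\eta_\tau(0)\le x)$.

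To finish, I would apply a \emph{second}, different color forgetting, this time to the reversed process $\hat\eta_\tau$ and with a single threshold at $x$: declare colors $\le x$ to be first class (color $1$) and colors $>x$ to be holes ($+\infty$). Starting from the packed configuration this projects to the shifted step initial condition $\eta_0^{inh;x;\tau}$ (first class on $(-\infty,x]$), and the induced dynamics match those of the two-color process with rates $\hat r_{inh}$. Under this projection the event $\{\hat\eta_\tau(0)\le x\}$ -- position $0$ carries a color $\le x$ -- is exactly the event that position $0$ is occupied by a first class particle, namely $\{\hat\eta_\tau^{inh;x;\tau}(0)=1\}$, whose probability is $\mathfrak p_1^x(\tau)$ by definition. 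Combining this with the previous identity gives \eqref{eq:inh2cl}. Since colors and positions are integers, replacing $x\in\R$ by $\lfloor x\rfloor$ leaves every event above unchanged, so there is no loss in treating the threshold as an integer.

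The substantive content is entirely the bookkeeping: keeping straight the two distinct couplings (the two-sided one with $\bm A=-1,\bm B=1$ on the forward process, and the one-sided one with threshold $x$ on the reversed process) and observing that the inversion built into Theorem \ref{th:genASEP} is precisely what converts the \emph{position} of color $0$ into the \emph{color} at position $0$, i.e. what turns a second-class-particle statistic into an occupation probability. I expect this matching of the two projections through the inversion to be the only delicate point; the well-definedness of the infinite-particle dynamics and the reduction of each finite-range event to a product of finitely many independent interactions is already secured in the proof of Theorem \ref{th:genASEP}, so nothing further is needed on that front.
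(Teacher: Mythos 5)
Your proposal is correct and follows essentially the same route as the paper's own proof: both apply Theorem \ref{th:genASEP} with $k=0$ to get $\mathrm{Prob}\left(\mathrm{inv}\left(\eta^{ASEP;\bm{gen}}_{\varnothing;\tau}\right)(0)\le x\right)=\mathrm{Prob}\left(\hat\eta_{\tau;\varnothing}^{ASEP;\bm{gen};\tau}(0)\le x\right)$, then identify the left side with $\mathrm{Prob}(\mathfrak f_2(\tau)\le x)$ via the color projection with thresholds around $0$ on the forward process, and the right side with $\mathfrak p_1^x(\tau)$ via the single-threshold projection at $x$ on the time-reversed process. Your explicit bookkeeping $\mathfrak f_2(\tau)=\eta_\tau^{-1}(0)\stackrel{d}{=}\hat\eta_\tau(0)$ is just a slightly more formulaic rendering of the same two couplings the paper uses.
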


\begin{proof}
Let us consider a $\Z$-colored ASEP process $\eta^{ASEP;\bm{gen}}_{\varnothing;t}$ with no deterministic swaps ($k=0$) and inhomogeneity parameters $\{ r_{inh} (z,t) \}$, and let $\hat \eta_{t; \varnothing}^{ASEP;\bm{gen};\tau}$ be its reversed process (see Section \ref{sec:symmAsepGen} for definitions). By Theorem \ref{th:genASEP}, $\eta^{ASEP;\bm{gen}}_{\varnothing;\tau}$ has the same distribution as $ \mathrm{inv} \left( \hat \eta_{\tau; \varnothing}^{ASEP;\bm{gen};\tau} \right)$. In particular,
\begin{equation}
\label{eq:234}
\mathrm{Prob} \left( \mathrm{inv} \left( \eta^{ASEP;\bm{gen}}_{\varnothing;\tau} \right) (0) \le x \right) = \mathrm{Prob} \left( \hat \eta_{\tau; \varnothing}^{ASEP;\bm{gen};\tau} (0) \le x \right).
\end{equation}
We claim that this equality is equivalent to \eqref{eq:inh2cl}. Indeed, for the process $\eta^{A;\bm{gen}}_{\varnothing;\tau}$ consider all particles of color $<0$ as the first class particles, the particle of color 0 as the second class particle, and particles of color $>0$ as holes. This creates a coupling between $\eta^{A;\bm{gen}}_{\varnothing;\tau}$ and $\eta_t^{2;inh}$ under which the evolution of the particle of color 0 from the former one coincides with the evolution of the second class particle from the latter one. Thus, the left-hand side of \eqref{eq:234} coincides with the left-hand side of \eqref{eq:inh2cl}. In a similar fashion, for $\hat \eta_{t; \varnothing}^{A;\bm{gen};\tau} $ consider all particles of color $\le x$ as the first class particles, and particles of color $>x$ as holes. This creates a coupling between $\hat \eta_{t; \varnothing}^{A;\bm{gen};\tau} $ and $\eta_t^{inh;x;\tau}$, under which the evolution of the set of positions filled by particles of color $\le x$ / the first class particles coincide. Thus, the right-hand side of \eqref{eq:234} coincides with the right-hand side of \eqref{eq:inh2cl}, which completes the proof.

\end{proof}

\begin{remark}
Theorem \ref{th:inh2cl} relates the distribution of the second class particle with the density of an inhomogeneous ASEP with one class of particles only. The asymptotic behavior of the latter object is one of the main questions in the theory of interacting particle systems, and it is known that under quite general conditions the asymptotic density follows the entropy solution of the inviscid Burgers equation (see \cite{B}, \cite{GKS}). Theorem \ref{th:inh2cl} translates these results into the asymptotic behavior of the second class particle.

It is important to stress that for nontrivial inhomogeneities Theorem \ref{th:inh2cl} involves two substantially \textit{different} processes. We do not know how to relate the distribution of the second class particle to the density properties of the process itself, without reversing the time.
\end{remark}

\begin{remark}
Using arguments from Section \ref{sec:finPert} below, Theorem \ref{th:inh2cl} can be extended to the case of a finite perturbation of the step initial condition. Then the right-hand side of \eqref{eq:inh2cl} will contain higher correlation functions of a two-color process instead of the first correlation function only. From the asymptotic point of view, this allows to establish the asymptotic behavior of second class particles whenever we know the local equilibrium type of results for inhomogeneities under investigation.
\end{remark}

\section{Second class particles in a finite perturbation of step initial conditions}
\label{sec:finPert}



In this section we study a homogeneous continuous time ASEP: All rate functions are set to be $r(z,t) \equiv 1$. Let us start with the following foundational result about the two-color ASEP. Consider the step initial condition $\eta_0^{st} (z) =1$ for $z \le 0$, and $\eta_0 (z) =+\infty$ for $z > 0$, (see Figure \ref{fig:8}) and denote the configuration at time $t$ by $\eta_t^{st}$.

\begin{figure}
\begin{center}
\includegraphics[width=10cm]{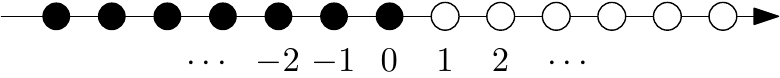}
\caption{ The step initial condition.
 \label{fig:8}}
\end{center}
\end{figure}

\begin{theorem}
\label{th:asepLLN}
Let $m= m(t)$, $t \in \R_{\ge 0}$, be a collection of integers such that
$$
\lim_{t \to \infty} \frac{m(t)}{t} = y, \qquad y \in \R.
$$
Then
\begin{equation}
\label{eq:dens1}
\lim_{t \to \infty} P ( \eta_t^{st} (m(t)) =1) = d(y) :=
\begin{cases}
0, \qquad & y \ge (1-q), \\
\frac{1}{2} \left( 1 - \frac{y}{1-q} \right), \qquad & -(1-q) < y < (1-q), \\
1, \qquad & y \le -(1-q).
\end{cases}
\end{equation}
Moreover, for any fixed $L \in \Z_{>0}$ the random variables $\{ \eta_t (m(t)+i) \}_{i=-L,..,L}$ converge, as $t \to \infty$, to i.i.d. Bernoulli distributions with probability of 1 equal to $d(y)$.
\end{theorem}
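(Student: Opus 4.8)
The plan is to separate the two assertions: the single-site limit \eqref{eq:dens1} is a hydrodynamic-limit statement, whereas the \emph{moreover} part is a strong local-equilibrium statement. Throughout, occupied sites carry color $1$ and empty sites color $+\infty$, so that in the two-color reduction a particle jumps right at rate $1$ and left at rate $q$, with net drift $1-q$.

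For \eqref{eq:dens1} I would invoke the hydrodynamic limit for this attractive exclusion dynamics. The conserved density $\rho$ evolves macroscopically by the inviscid Burgers equation with flux $(1-q)\rho(1-\rho)$, so the characteristic speed is $c(\rho)=(1-q)(1-2\rho)$. For the step profile $\rho_0(\xi)=\mathbf 1_{\xi<0}$ one has $c(1)=-(1-q)<c(0)=1-q$, the characteristics fan out, and the entropy solution is the rarefaction wave obtained by solving $c(\rho)=\xi/\tau$: it equals $\tfrac12\big(1-\tfrac{\xi/\tau}{1-q}\big)$ on the fan $|\xi/\tau|<1-q$ and the constants $1$, $0$ to its left and right. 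Evaluating at $\xi=m(t)\sim yt$, $\tau=t$ returns precisely $d(y)$. The passage from the almost-sure convergence of the empirical density to the stated convergence of $P(\eta_t^{st}(m(t))=1)$ is classical (for the step profile it goes back to \cite{R}; for general attractive dynamics see \cite{KL}, \cite{L3}, \cite{B}, \cite{GKS}); note that the same limit $d(y)$ holds with $m(t)+i$ in place of $m(t)$ for each fixed $i$, since a bounded shift leaves the macroscopic location unchanged.

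For the \emph{moreover} part, the boundary cases $y\ge 1-q$ and $y\le-(1-q)$ are immediate from \eqref{eq:dens1} (the density tends to $0$ or $1$, forcing the window to be empty or full with probability tending to one), so assume $d(y)\in(0,1)$. I would show that the law of the configuration \emph{as seen from} $m(t)$ converges to the product Bernoulli measure $\nu_{d(y)}$. Since $\{0,1\}^{\Z}$ is compact in the product topology, the laws $\mathcal L\big((\eta_t^{st}(m(t)+i))_{i\in\Z}\big)$ are tight; let $\nu$ be any subsequential limit. Its one-dimensional marginals are $\mathrm{Bernoulli}(d(y))$ by the previous paragraph. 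The substantive claims are that $\nu$ is translation invariant and invariant for the ASEP dynamics. Granting these, the classification of the translation-invariant stationary measures of the exclusion process as mixtures of the product Bernoulli measures $\{\nu_\rho\}_{\rho\in[0,1]}$ \cite{L3}, combined with $\nu$ having the single density $d(y)$, forces $\nu=\nu_{d(y)}$. As this pins down every subsequential limit, the full sequence converges, and restricting to coordinates $i=-L,\dots,L$ gives the asserted i.i.d.\ Bernoulli law.

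The translation invariance and, above all, the dynamical stationarity of the local limit $\nu$ form the one genuinely hard step: this is exactly strong local equilibrium at a continuity point of the macroscopic profile, and it is where the Lipschitz continuity and strict monotonicity of the rarefaction profile enter (they guarantee that the density is essentially constant across the mesoscopic windows used in the estimates and that neighboring observation points become statistically indistinguishable). I would establish stationarity through attractiveness and the density control of the first part, in the spirit of the one-block/two-block estimates for conservative attractive systems; most economically one simply cites the local-equilibrium theorems for attractive exclusion processes (\cite{KL}, \cite{L3} and references therein), which give convergence to $\nu_{\rho(\xi,\tau)}$ at every continuity point of the entropy solution. In either route the core obstacle is this passage from the correct local \emph{density} to the full product \emph{structure}; the remaining ingredients — compactness, the Burgers entropy solution, and the classification of stationary measures — are soft or classical.
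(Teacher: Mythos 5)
The paper offers no proof of this statement at all: immediately after Theorem \ref{th:asepLLN} it declares the result well known, citing \cite{AV} and \cite{BeF} for the general attractive/asymmetric case, \cite{L1}, \cite{L2} for $y=0$, and \cite{R} for $q=0$, and identifying the two claims as the hydrodynamic limit and local equilibrium respectively. Your sketch is therefore not competing with an in-paper argument; it reconstructs exactly the classical route behind those citations, and it does so accurately: the two-color reduction with right rate $1$ and left rate $q$, the Burgers flux $(1-q)\rho(1-\rho)$ with rarefaction fan $\rho=\tfrac12\bigl(1-\tfrac{\xi/\tau}{1-q}\bigr)$ matching $d(y)$, compactness of $\{0,1\}^{\Z}$ for tightness, and Liggett's classification of translation-invariant stationary measures. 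You also correctly isolate the genuinely hard content (translation invariance and dynamical stationarity of subsequential limits, i.e.\ strong local equilibrium at a continuity point of the profile) and defer it to the attractive-systems literature, which is the same move the paper makes.

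One step of your outline is stated too quickly to be airtight. From ``$\nu$ is a translation-invariant stationary measure, hence a mixture $\nu=\int \nu_\rho\,\gamma(d\rho)$'' together with ``the one-dimensional marginal of $\nu$ is Bernoulli$(d(y))$'' you conclude $\nu=\nu_{d(y)}$; but a one-site marginal only constrains the mean $\int\rho\,\gamma(d\rho)=d(y)$ and does not exclude a nontrivial mixture. To force $\gamma=\delta_{d(y)}$ you need the local density of the limit to be deterministic — e.g.\ via a two-block-type estimate, or via the attractiveness coupling that sandwiches the process between equilibria at densities $d(y)\mp\varepsilon$ (which is essentially how \cite{AV} proceeds), or by controlling two-point correlations. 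Since you have already flagged ``the passage from the correct local density to the full product structure'' as the core obstacle and propose to cite the local-equilibrium theorems that contain precisely this step, the gap is one of articulation rather than of strategy; with that point repaired, your sketch is a faithful account of the proof the paper's references supply.
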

Theorem \ref{th:asepLLN} is well known (it goes back to \cite{AV}, \cite{BeF}, see also earlier works \cite{L1}, \cite{L2} for $y=0$ case, and \cite{R} for $q=0$ case). The first claim is typically referred to as the hydrodynamic limit (applied to a particular initial condition), the second claim is referred to as the local equilibrium.

Let $L$ be an arbitrary positive integer, and let
$$
I: \{ -L, -L+1, \dots, -1,0,1, \dots, L-1, L \} \to \{1,2, +\infty\}
$$
be an arbitrary fixed map between these sets such that $I^{-1} (2)$ is non-empty. We consider the three-color ASEP with the initial condition
\begin{equation}
\label{eq:pIC}
\eta_0^{A;pert} (z) = \begin{cases}
1, \qquad & z <-L, \\
I(z), \qquad & -L \le z \le L, \\
+\infty, \qquad & z> L.
\end{cases}
\end{equation}

\begin{figure}
\begin{center}
\noindent{\scalebox{0.2}{\includegraphics[height=7cm]{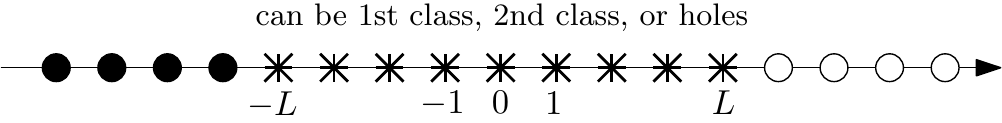}}}
\caption{ A finite perturbation of the step initial condition.
 \label{fig:9}}
\end{center}
\end{figure}

We refer to such $\eta_0^{A;pert}$ as a \textit{finite perturbation} of the step initial condition (see Figure \ref{fig:9}). Let $M := |I^{-1} (1)|$, and $N := |I^{-1} (2)|$. Thus, $N \ge 1$ is the (finite) number of the second class particles in our process.
It is natural to distinguish the leftmost, the second from the left, etc. second class particles. Let $S_1(t) \le S_2(t) \le \dots \le S_N (t)$ be the positions of the second class particles at time $t$. In other words, the rules of the evolution of the process give us the set
$\left\{ \left( \eta_t^{A;pert} \right)^{-1} (2) \right\} = \{ S_i (t) \} $, in which we number the elements in the increasing order (here by $\left( \cdot \right)^{-1}$ we denote the pre-image).
The main result of this section gives the asymptotic distribution of $S_k (t)$, for any $k$. In order to describe it, we need a bit more notation.

Consider a permutation 
$$
\pi_I: \{-L, \dots, L \} \to \{-L, \dots, L \}
$$ 
such that $\pi_I \left( \{ -L, \dots, -L+M-1 \} \right) = I^{-1} (1)$ and $\pi_I \left( \{ -L+M, \dots, -L+M+N-1 \} \right) = I^{-1} (2)$. There may be more than one such permutation, any choice will suit our purposes. Let $\pi_I = p_s p_{s-1} \dots p_2 p_1$ be its minimal decomposition into the product of transpositions of neighboring elements. Again, there might be more than one such decomposition, we fix an arbitrary one.

Fix a positive real $\bm{p} \le 1$, and consider a filling of $\{ -L, \dots, L \}$ by particles via $\tilde \eta (i) = 1$ with probability $\bm{p}$, $\tilde \eta (i) = +\infty$ with probability $1-\bm{p}$, and $\{ \tilde \eta (i) \}_{i=-L}^L$ are jointly independent. In words, we fill all the positions by particles independently and with probability $\bm{p}$. Next, we apply to this filling the asymmetric swap $W_{p_s,1}$, then $W_{p_{s-1},1}$, etc., and finally $W_{p_1,1}$. As a result, we obtain a certain (random) configuration of particles inside $\{-L, \dots, L\}$. Let $f_k (\bm{p},I)$ be the probability that the set $\{ -L+M, \dots, -L+M+N-1 \}$ contains at least $k$ particles of thus obtained configuration, $k=0, 1, \dots, N$.

\begin{theorem}
\label{th:finPert}

Consider a three-color ASEP started from a perturbed step initial condition of the form \eqref{eq:pIC}. In the notations above, the positions of the second class particles $S_1(t) \le S_2(t) \le \dots \le S_N (t)$ satisfy

\begin{equation}
\label{eq:theorAsep}
\lim_{t \to \infty} \mathrm{Prob} \left( \frac{S_k (t)}{t} < x \right) = f_{k} ( d(-x), I), \qquad x \in \R, \ \ k=1, \dots, N,
\end{equation}
where $d(x)$ is given by \eqref{eq:dens1}.
\end{theorem}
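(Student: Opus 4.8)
The plan is to realize the three-color process as a projection of a $\Z$-colored ASEP, apply the color-position symmetry of Theorem~\ref{th:genASEP}, and then extract the answer from the two-species local equilibrium of Theorem~\ref{th:asepLLN}. First I would set up the coupling. Let $\pi_I=p_sp_{s-1}\cdots p_1$ be the fixed minimal decomposition, and consider the $\Z$-colored process $\eta^{ASEP;\bm{gen}}_{p_1,\dots,p_s;t}$ obtained by applying $W_{p_1,1},\dots,W_{p_s,1}$ to the packed configuration and then running the homogeneous continuous-time dynamics. Because the word is reduced, each of these swaps acts on an in-order pair and is therefore deterministic, so after the swaps position $z$ carries the color prescribed by $\pi_I$ inside the window $\{-L,\dots,L\}$ and color $z$ outside it. Setting $\bm{A}=-L+M-1$ and $\bm{B}=-L+M+N$ and declaring colors $\le\bm{A}$ first class, colors in $(\bm{A},\bm{B})$ second class, and colors $\ge\bm{B}$ holes, one checks directly against \eqref{eq:pIC} that this projection is coupled to $\eta^{A;pert}_t$, with the second class particles of the latter occupying exactly the positions of the colors $\{-L+M,\dots,-L+M+N-1\}=(\bm{A},\bm{B})\cap\Z$. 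In particular $\{S_k(t)<xt\}$ is the event that at least $k$ of the colors in $(\bm{A},\bm{B})$ sit at positions $<xt$ in the forward process.

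Next I would invoke Theorem~\ref{th:genASEP}. Writing $\Phi:=\eta^{ASEP;\bm{gen}}_{p_1,\dots,p_s;t}$ for the forward (position-to-color) bijection and $\Psi:=\hat\eta^{ASEP;\bm{gen};t}_{t;p_s,\dots,p_1}$ for the time-reversed one, the theorem gives $\Phi^{-1}\stackrel{d}{=}\Psi$, whence
\[
\#\{c\in(\bm{A},\bm{B}):\Phi^{-1}(c)<xt\}\ \stackrel{d}{=}\ \#\{c\in(\bm{A},\bm{B}):\Psi(c)<xt\}.
\]
The right-hand side counts the positions $c$ in the block $\{-L+M,\dots,-L+M+N-1\}$ at which the reversed-process color is $<xt$. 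Since the dynamics are homogeneous, the reversed process is simply: run the ordinary ASEP from the packed configuration for time $t$ and then apply the swaps $W_{p_s,1},\dots,W_{p_1,1}$. Projecting the colors with threshold $xt$ (color $<xt$ becomes a particle, color $\ge xt$ becomes a hole) turns the packed initial data into a step configuration whose step sits at $xt$, and this projection commutes with the swaps because collapsing a range of colors never changes the swap rules.

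Finally I would apply the local equilibrium. In the shifted coordinate the two-species step configuration is the standard one of Theorem~\ref{th:asepLLN}, and the fixed window $\{-L,\dots,L\}$ sits around position $-xt$, so $m(t)/t\to -x$ and the second part of that theorem shows that the occupation variables on $\{-L,\dots,L\}$ converge, as $t\to\infty$, to i.i.d.\ Bernoulli variables with parameter $d(-x)$ given by \eqref{eq:dens1}. Applying the final swaps $W_{p_s,1},\dots,W_{p_1,1}$ to this configuration and counting particles in the block $\{-L+M,\dots,-L+M+N-1\}$ is, by construction, exactly the procedure defining $f_k(\bm{p},I)$ with $\bm{p}=d(-x)$; hence the probability that the count is at least $k$ tends to $f_k(d(-x),I)$, which is \eqref{eq:theorAsep}.

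The main obstacle is precisely this last reduction: one must justify passing from the full multi-species reversed configuration to a purely two-species local-equilibrium statement, i.e.\ that the finitely many deterministic swaps acting on an asymptotically product window may be evaluated against the \emph{exact} Bernoulli limit. This requires controlling the joint convergence of the $O(1)$ relevant occupation variables on the fixed window (not merely marginal convergence), after which the harmless integer-part rounding and the strict-versus-weak-inequality distinction in $S_k(t)/t<x$ are dealt with by routine measure-zero arguments; I do not expect any genuinely new probabilistic estimate beyond Theorems~\ref{th:genASEP} and \ref{th:asepLLN}.
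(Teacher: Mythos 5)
Your proposal is correct and follows essentially the same route as the paper's proof: the deterministic realization of $\pi_I$ via a reduced word applied to the packed configuration, the counting identity $\mathrm{Prob}(S_k(t)<xt)=\mathrm{Prob}(\#\{c:\Phi^{-1}(c)<xt\}\ge k)$ transferred through Theorem~\ref{th:genASEP} to the time-reversed process, and the evaluation of the finitely many final swaps against the i.i.d.\ Bernoulli window coming from the local-equilibrium part of Theorem~\ref{th:asepLLN}. The ``obstacle'' you flag at the end is already resolved by the second claim of Theorem~\ref{th:asepLLN}, which gives exactly the joint (not merely marginal) convergence of the occupation variables on the fixed window, so no additional estimate is needed.
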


\begin{remark}
The quantity $f_k ( d(-x), I)$ can be straightforwardly computed. Explicit computations in some cases are given in Examples \ref{ex:basic}--\ref{ex:last} below.
\end{remark}

\begin{proof}

Consider a homogeneous $\Z$-color ASEP $\eta^{ASEP;\bm{hom}}_{p_1, \dots, p_s;t}$ as defined in Section \ref{sec:symmAsepGen} (we set $r(z,t)$ to be equal to 1 for all $z,t$ and write $\bm{hom}$ in notation because of this). Note that the definition and the minimality of the decomposition of $\pi_I$ imply that all initial asymmetric swaps act on a configuration in a deterministic way. Therefore, if we regard particles of colors $<(-L+M)$ as first class particles, particles of color weakly between $(-L+M)$ and $(-L+M+N-1)$ as second class particles, and all other colors as holes, we recover the initial configuration \eqref{eq:pIC}. This provides a coupling between the multi-color process and the three-color process $\eta_t^{A;pert}$ that we study.

For $y \in \R$, let $\mathcal L_{t;y}$ be the (random) set of integers $a$ such that $a\in \{-L+M, \dots ,-L+M+N-1\}$ and $\left( \eta^{ASEP;\bm{hom}}_{p_1, \dots, p_s;t} \right)^{-1} (a) < y$.

By the coupling between the processes, we have
\begin{equation}
\label{eq:111a}
\mathrm{Prob} \left( S_k (t) < y \right) = \mathrm{Prob} \left( |\mathcal L_{t;y}| \ge k \right).
\end{equation}

Now let us consider the process $\eta^{ASEP;\bm{hom}}_{t;p_s, \dots, p_1}$ (note the ordering change for $p_i$'s), which is related to $\eta^{ASEP;\bm{hom}}_{p_1, \dots, p_s;t}$ by the time reversion. Let us denote the (random) set of integers $\{ a: a \in \{(-L+M), \dots, (-L+M+N-1) \}$, $\eta^{ASEP;\bm{hom}}_{t;p_s, \dots, p_1} (a) <y \}$ by $\mathcal R_{t;y}$. Theorem \ref{th:genASEP} implies
\begin{equation}
\label{eq:111b}
\mathrm{Prob} \left( |\mathcal L_{t;y}| \ge k \right) = \mathrm{Prob} \left( | \mathcal R_{t;y} | \ge k \right).
\end{equation}
This was the key step, since the right-hand side involves the statement which depends only on the distinction between the colors that are less than $y$, and those that are greater or equal to $y$. Hence, this quantity can be determined via a standard two-color ASEP.

Let $\mu_t (z)$ be the configuration which appears in $\eta^{ASEP;\bm{hom}}_{t;p_s, \dots, p_1}$ after time $t$ but before the deterministic swaps. By Theorem \ref{th:asepLLN}, the variables
$$
\mathbf 1_{\mu_t (-L) <xt}, \mathbf 1_{\mu_t (-L+1) <xt}, \dots, \mathbf 1_{\mu_t (L) <xt}
$$
are asymptotically independent and identically distributed as $t \to \infty$, with their distributions given by
$$
\lim_{t \to \infty} \mathrm{Prob} \left( \mathbf 1_{\mu_t (l) < xt} =1 \right) = d(-x), \qquad -L \le l \le L, \ \  x \in \R.
$$
This produces the i.i.d Bernoulli filling of the segment $\{-L, -L+1, \dots, L\}$, which is present in the right-hand side of \eqref{eq:theorAsep}. Combining \eqref{eq:111a}, \eqref{eq:111b} and the application of the swaps $p_s$, $p_{s-1}$, ..., $p_1$ in this order leads to the quantities $f_k ( d(-x), I)$, which concludes the proof.

\end{proof}

We proceed to give some examples of Theorem \ref{th:finPert}.

\begin{example}
\label{ex:basic}
$L=0$, $I(0)=2$. Then
$$
\lim_{t \to \infty} \mathrm{Prob} \left( \frac{S_1 (t)}{t} < x \right) = d(-x).
$$
Since $d'(-x)$ is constant in $(-(1-q);1-q)$ and is equal to $0$ outside this interval, $S_1(t)$ is uniformly distributed on $(-(1-q);1-q)$. The result was obtained in \cite{FK}, \cite{FGM}.
\end{example}

See Fig. \ref{fig:examples} for a depiction of the initial conditions from the following three examples.

\begin{example}
\label{ex:11}
$L \in \Z_{\ge 0}$, $I(a)=2$, for $-L \le a \le 0$, and $I(a)=+\infty$, for $1 \le a \le L$. As in the previous example, $\pi_I$ is the identity. We have
\begin{multline*}
\lim_{t \to \infty} \mathrm{Prob} \left( \frac{S_k (t)}{t} < x \right) \\ = \sum_{l=k}^{L+1} \binom{L+1}{l} d(-x)^l (1-d(-x))^{L+1-l} = 1- \sum_{l=0}^{k-1} \binom{L+1}{l} d(-x)^l (1-d(-x))^{L+1-l}.
\end{multline*}
For $k=1$ this result was recently obtained in \cite{GSZ}. Note that the limiting distribution of $S_k$ can be obtained in the following way: we take $L+1$ uniform random variables on the segment $[-(1-q);(1-q)]$, and take the $k$-th smallest of them. The appearance of the uniform distribution might be attributed to the situation of Example \ref{ex:basic}. However, note that the \textit{joint} limiting distribution of $\{ S_k \}$ (which we do not address in the text) is known to be much more involved than them being independent even in the case of the TASEP, see \cite{AmAnV}.
\end{example}

\begin{example}
\label{ex:quadratic}
$L=1$, $I(-1)=2$, $I(0)=1$, $I(1)=+\infty$. In this case $\pi_I$ is equal to the transposition $(-1,0)$.
The behavior of the second class particle is given by
\begin{equation}
\label{eq:smPert}
\lim_{t \to \infty} \mathrm{Prob} \left( \frac{S_1 (t)}{t} < x \right) = d(-x) + (1-q) d(-x) (1-d(-x)).
\end{equation}
Indeed, our recipe works in the following way here. We consider sites $-1$ and $0$ filled independently with probability $d(-x)$ each, apply the asymmetric swap $W_{(-1,0),1}$, and we are interested in the probability that $0$ is filled by a particle after it. If $-1$ is filled by a particle before the swap, then 0 must be filled by a particle after the swap --- this gives the first term in the right-hand side of \eqref{eq:smPert}. If $-1$ is not filled by a particle before the swap, then $0$ should be filled, and the swap should not move this particle from 0 --- this gives the second term.

Note that this distribution significantly depends on $q$. Also note that this result contradicts \cite[Conjecture 1.2]{GSZ}.
\end{example}

\begin{figure}
\begin{center}
 {\scalebox{1.2}{\includegraphics{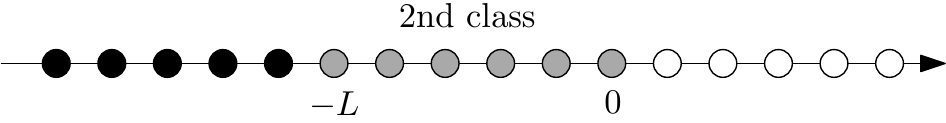}}} \\ \medskip \medskip
 {\scalebox{1.2}{\includegraphics{figure11.pdf}}} \\ \medskip \medskip
 {\scalebox{1.2}{\includegraphics{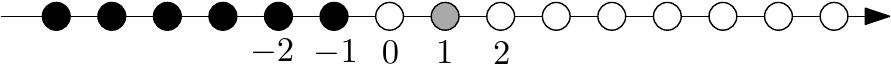}}}
 \caption{Initial conditions in Examples \ref{ex:11}, \ref{ex:quadratic}, and \ref{ex:33}, respectively.
 \label{fig:examples} }
\end{center}
\end{figure}

\begin{example}
\label{ex:33}
$L=1$, $I(1)=2$, $I(0)=+\infty$, $I(-1)=1$. In this case $\pi_I$ is equal to the transposition $(0,1)$.
The behavior of the second class particle is given by
\begin{equation}
\label{eq:smPert}
\lim_{t \to \infty} \mathrm{Prob} \left( \frac{S_1 (t)}{t} < x \right) = d(-x)^2 + q d(-x) (1-d(-x)).
\end{equation}
The computation is analogous to the previous example.
\end{example}

\begin{example}
\label{ex:last}
Let $q=0$ (so we are in the TASEP case), $I(a)=+\infty$, for $a=-L, \dots, -1$, $I(0)=2$, $I(a)=1$, for $a=1,\dots, L$. Then $\pi_I$ can be taken as the permutation $\pi_I (-z)=z$, for $z=-L, -L+1, \dots, L$ (see Fig. \ref{fig:GUE-GUE} for a depiction).
The behavior of the second class particle is given by
$$
\lim_{t \to \infty} \mathrm{Prob} \left( \frac{S_1 (t)}{t} < x \right) = \sum_{l=L+1}^{2L+1} d(-x)^l (1- d(-x))^{2L+1-l} \binom{2L+1}{l}.
$$
Indeed, note that in the TASEP case the application of $\pi_I^{-1} = \pi_I$ moves all particles in $\{-L, \dots, L\}$ to the right-most possible positions. Thus, after such shift the position $0$ is occupied by a particle if and only if there are at least $L+1$ particles initially.

In the case of TASEP and one second class particle, a general way to find the asymptotic distribution of the second class particle was found by \cite{CP}.
\end{example}

\begin{figure}
\begin{center}
\includegraphics[width=15cm]{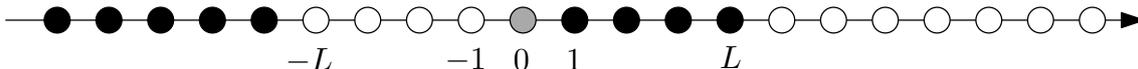}
\caption{ A GUE-GUE shock type of the initial condition.
 \label{fig:GUE-GUE}}
\end{center}
\end{figure}

\begin{remark}
The analysis of this section can be applied to the homogeneous S6V model with the use of Theorem \ref{th:genS6V} instead of Theorem \ref{th:genASEP}. The only subtle point is that, up to our knowledge, the analog of Theorem \ref{th:asepLLN} for S6V is not explicitly present in the current literature. However, it is plausible that such an analog holds with the density function found in \cite{BCG} and equilibrium measures studied in \cite{Ag1}. In fact, there are two approaches that might allow to get the result. First, a recent paper \cite{Ag2} establishes the local equilibrium for general initial conditions for the stochastic six vertex model on a cylinder. Very likely, the same methods work in the case of integer lattice as well. Second, for the step initial condition there are explicit formulas for certain observables, see \cite{BCG}, \cite{BM}. They might also allow to access necessary asymptotics.

Given such an analog of Theorem \ref{th:asepLLN}, our subsequent analysis of second class particles holds word by word. Note that limiting distributions will be different from the ASEP, since both the hydrodynamic behavior and the local equilibrium for S6V differ from those for the ASEP.
\end{remark}

\section{Second class particle in GUE-GUE shock}

In this section we will study a homogeneous ($r(z,t) \equiv 1$) three-color TASEP ($q=0$) with a particular initial condition. One new feature of our analysis is that we allow the initial condition to depend on time.

Let $L$ be a positive integer, and consider a TASEP denoted as $\eta_t^{2;tasep} (z)$ with the initial condition
$$
\eta_0^{2;tasep} (z) = \begin{cases}
1, \qquad & z <-L, \\
+\infty, \qquad & -L \le z \le -1, \\
2, \qquad & z=0, \\
1, \qquad & 1 \le z \le L, \\
+\infty, \qquad & z> L.
\end{cases}
$$

Let $\mathfrak{f^{tasep}} (t)$ be the position of the unique second class particle in the process.
Again, we will study the distribution of $\mathfrak{f^{tasep}} (t)$ by relating the process with a simpler process via the color-position symmetry. Consider a two-color TASEP $\eta_t^{tasep} (z)$ with the step initial condition
$$
\eta_0^{tasep} (z) = \begin{cases}
1, \qquad & z \le 0, \\
+\infty, \qquad & z >0.
\end{cases}
$$
Let $h^{tasep} (x,t)$ be the number of particles that are weakly to the right of $x$ in $\eta_t^{tasep} (z)$, for any $x \in \R$.

\begin{proposition}
\label{prop:shock}
For any $x \in \Z$ we have
\begin{equation}
\label{eq:SchockExact}
\mathrm{Prob} \left( \mathfrak{f^{tasep}} (t) \le x \right) = \mathrm{Prob} \left( h^{tasep} (-x-L,t) - h^{tasep} (-x+L,t) \ge L+1 \right).
\end{equation}
\end{proposition}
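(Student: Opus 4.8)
The plan is to obtain this as the exact (finite-time) version of the finite-perturbation mechanism of Section \ref{sec:finPert}, specialized to $N=1$, keeping everything at fixed $t$ rather than passing to the hydrodynamic limit. First I would observe that the initial condition of $\eta^{2;tasep}$ is a finite perturbation \eqref{eq:pIC} with the given $L$, $M=|I^{-1}(1)|=L$, $N=|I^{-1}(2)|=1$, and $\pi_I$ the order-reversing permutation $z\mapsto -z$ of $\{-L,\dots,L\}$ (this is precisely the initial condition of Example \ref{ex:last}); the single second class particle carries color $0$ in the associated $\Z$-colored TASEP. Fixing a reduced word $\pi_I=p_s\cdots p_1$ in adjacent transpositions and coupling $\eta^{2;tasep}$ to $\eta^{ASEP;\bm{hom}}_{p_1,\dots,p_s;t}$ so that $\mathfrak{f^{tasep}}(t)$ is the position of the color-$0$ particle, the proof of Theorem \ref{th:finPert} with $N=k=1$ (equations \eqref{eq:111a}--\eqref{eq:111b}, which rest on the color-position symmetry of Theorem \ref{th:genASEP}) gives, for every integer $y$,
\begin{equation*}
\mathrm{Prob}\left(\mathfrak{f^{tasep}}(t) < y\right) = \mathrm{Prob}\left(\eta^{ASEP;\bm{hom}}_{t;p_s,\dots,p_1}(0) < y\right),
\end{equation*}
where the right-hand process is the time reversal: run the packed $\Z$-colored TASEP for time $t$ to get a configuration $\mu_t$, then apply the deterministic swaps $W_{p_s,1},\dots,W_{p_1,1}$.

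Next I would project onto two colors at threshold $y$, declaring a color $<y$ a particle and a color $\ge y$ a hole. Because every deterministic swap acts inside $\{-L,\dots,L\}$ and, for $q=0$, the rule $W_{(z,z+1),1}$ merely moves a particle rightward past a hole, the event $\{\eta^{ASEP;\bm{hom}}_{t;p_s,\dots,p_1}(0)<y\}$ depends only on the two-color pattern $\{\mathbf 1_{\mu_t(l)<y}\}_{l=-L}^{L}$. Since a reduced word of the order-reversing permutation is a sorting network, applying it deterministically with the ``swap iff left smaller'' comparator sorts all particles of this pattern to the rightmost positions of $\{-L,\dots,L\}$, exactly as noted in Example \ref{ex:last}. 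Hence site $0$ ends up occupied if and only if the pattern has at least $L+1$ particles, giving
\begin{equation*}
\mathrm{Prob}\left(\eta^{ASEP;\bm{hom}}_{t;p_s,\dots,p_1}(0)<y\right) = \mathrm{Prob}\left( \#\{\, l\in\{-L,\dots,L\} : \mu_t(l)<y \,\} \ge L+1 \right).
\end{equation*}

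Finally I would identify the right-hand count with the step TASEP. The two-color projection (particle iff color $<y$) of the packed configuration places particles exactly at positions $\le y-1$, which by spatial homogeneity is a shift of the step initial condition of $\eta^{tasep}$, and the projection is an autonomous two-color TASEP. Setting $y=x+1$, so that $\mathfrak{f^{tasep}}(t)<x+1\iff\mathfrak{f^{tasep}}(t)\le x$, the shift by $x$ turns $\#\{l\in\{-L,\dots,L\}:\mu_t(l)<y\}$ into the number of occupied sites of $\eta^{tasep}_t$ in the block obtained from $\{-L,\dots,L\}$, i.e.\ the block of sites about $-x$ bounded by $-x-L$ and $-x+L$. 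This count is exactly the height-function difference $h^{tasep}(-x-L,t)-h^{tasep}(-x+L,t)$ appearing on the right of \eqref{eq:SchockExact}, which yields the claim.

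I expect the main obstacle to be the bookkeeping of this last identification: one must carefully align the endpoints of the shifted block with the ``weakly to the right'' convention for $h^{tasep}$ and keep the weak versus strict inequalities consistent through the substitution $y=x+1$ and through the particle/hole projection, since a careless treatment of an endpoint shifts the count by one site. The only other delicate point, that a reduced word for the order-reversing permutation deterministically sorts an arbitrary two-color pattern, is robust: it is the standard fact that reduced words of the longest element are sorting networks, it does not depend on the chosen reduced word, and because $0$ is the midpoint of the symmetric segment the threshold $L+1$ is the same whether the particles are driven to the rightmost or leftmost positions.
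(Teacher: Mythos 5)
Your argument is essentially the paper's own proof: the paper likewise fixes a reduced word of the order-reversing permutation of $\{-L,\dots,L\}$, applies Theorem \ref{th:genASEP} to equate $\mathrm{Prob}(\mathfrak{f^{tasep}}(t)\le x)$ with the probability that position $0$ carries a color $\le x$ in the time-reversed process after the deterministic swaps, uses the same sorting-network property of the $q=0$, $x=1$ swaps to reduce this to the event that the window $\{-L,\dots,L\}$ contains at least $L+1$ colors $\le x$, and then projects at threshold $x$ onto a shifted step TASEP; your routing through the finite-$t$ content of \eqref{eq:111a}--\eqref{eq:111b} from Section \ref{sec:finPert} and Example \ref{ex:last} is only a repackaging of that same mechanism. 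One concrete bookkeeping point, precisely at the endpoint alignment you flagged as delicate: the particle count over your $(2L+1)$-site block $\{-x-L,\dots,-x+L\}$ equals $h^{tasep}(-x-L,t)-h^{tasep}(-x+L+1,t)$ under the ``weakly to the right'' convention, not $h^{tasep}(-x-L,t)-h^{tasep}(-x+L,t)$, so your last identification is off by the single site $-x+L$. This one-site discrepancy is shared by the statement \eqref{eq:SchockExact} itself (the paper's proof asserts the final equality without tracking the endpoint, and at $L=0$ the stated right-hand side would even degenerate to $0$, whereas the corrected version reproduces Theorem \ref{th:inh2cl}); it is immaterial for the $t^{2/3}$-scale asymptotics of Theorem \ref{th:shock2cl}, but if you want the literal fixed-$t$ identity you should record it with the argument $-x+L+1$.
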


\begin{proof}
Let $\pi$ be a permutation of the set $\{-L, -L+1, \dots, L-1,L \}$ such that $\pi(-L+i) = L-i$, for $i=0,1,\dots, 2 L$. Let $\pi = s_{m} s_{m-1} \dots s_2 s_1$ be a minimal length decomposition of $\pi$ into transpositions of neighboring elements (there are many such decompositions, we choose any of them; we always have $m=2L(2L+1)/2$).

Consider a homogeneous $\Z$-color TASEP $\eta^{TASEP;\bm{hom}}_{s_1, \dots, s_m;t}$ and its time-reversed version $\hat \eta^{TASEP;\bm{hom}}_{t; s_m, \dots, s_1}$. By Theorem \ref{th:genASEP}, we know that
\begin{equation}
\label{eq:238}
\mathrm{Prob} \left( \mathrm{inv} \left( \eta^{TASEP;\bm{hom}}_{s_1, \dots, s_m;t} \right) (0) \le x \right) = \mathrm{Prob} \left( \hat \eta^{TASEP;\bm{hom}}_{t; s_m, \dots, s_1} (0) \le x \right).
\end{equation}
Treating colors $<0$ as the first class particles, color $0$ as the second class particle, and colors $>0$ as holes, we see that the left-hand side of \eqref{eq:238} coincides with the left-hand side of \eqref{eq:SchockExact}. To address the right-hand sides of the equalities, note that the asymmetric swap operator $W_{s_1,1} W_{s_2,1} \dots W_{s_m,1}$ orders the particles in $\{ -L, \dots, L\}$ according to their color, with larger colors to the left and smaller colors to the right. Thus, the position $0$ will be filled by a color $\le x$ if and only if there are at least $L+1$ particles with colors $\le x$ inside $\{ -L, \dots, L\}$ before the application of these swaps. Treating colors $\le x$ as the first class particles, and colors $>x$ as holes, we arrive at the equality of the right-hand sides of \eqref{eq:238} and \eqref{eq:SchockExact}.

\end{proof}

Next, we need the known asymptotics of a standard TASEP started from the step initial condition.

\begin{theorem}
\label{th:TasepAiry}
Assume that $u \in \R$. We have
\begin{equation}
\lim_{t \to \infty} \frac{ h^{tasep} \left( 2 u (t/2)^{2/3}, t \right) - t/4 + u (t/2)^{2/3} - u^2 t^{1/3} 2^{-4/3} }{ - t^{1/3} 2^{-4/3}} = \mathcal{A}_2 (u),
\end{equation}
where in the right-hand side $\mathcal{A}_2 (u)$ stands for the \textit{Airy process}, and the convergence is in the sense of finite-dimensional distributions.

\end{theorem}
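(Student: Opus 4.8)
The plan is to deduce Theorem \ref{th:TasepAiry} from the well-established convergence of the step-initial-condition TASEP to the Airy$_2$ process, which proceeds by mapping the model to exponential last passage percolation (LPP) and invoking the asymptotics of the associated determinantal point process. First I would set up the dictionary. Labeling the particles so that particle $j$ starts at position $1-j$, and introducing i.i.d.\ $\mathrm{Exp}(1)$ variables $w_{i,j}$ for the waiting time of the $i$-th jump of particle $j$, one forms the last passage time $G(M,N) = \max_{\pi} \sum_{(i,j)\in\pi} w_{i,j}$ over up-right paths in the quadrant. Since $h^{tasep}(x,t)\ge n$ exactly when the $n$-th particle from the right (which started at $1-n$) has made at least $x+n-1$ jumps by time $t$, the standard TASEP--LPP correspondence gives an exact identity of the shape $\{h^{tasep}(x,t)\ge n\}=\{G(x+n-1,n)\le t\}$. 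Thus the distribution function of $h^{tasep}(x,t)$ is read off directly from that of the last passage time, and the whole theorem reduces to the transversal fluctuation asymptotics of $G$.

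Next I would quote the process-level asymptotics of exponential LPP. The limit shape $G(\lfloor aN\rfloor,N)/N\to(\sqrt a+1)^2$ expands near the diagonal as $G((1+\xi)N,N)\approx N(4+2\xi-\xi^2/4)$, and with the transversal scaling $\xi=2uN^{-1/3}$ the fluctuations satisfy
\begin{equation*}
\frac{G\bigl(N+\lfloor 2uN^{2/3}\rfloor,\,N\bigr)-4N-4uN^{2/3}+u^2N^{1/3}}{2^{4/3}N^{1/3}}\longrightarrow \mathcal{A}_2(u)
\end{equation*}
in the sense of finite-dimensional distributions; this is the content of the theorems of Johansson and of Pr\"ahofer--Spohn (with later refinements by Borodin--Ferrari and others). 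The only genuinely analytic ingredient is the convergence of the correlation kernel of the underlying determinantal ensemble to the Airy kernel, which I would cite rather than reprove, as it lies well outside the combinatorial methods of the present paper.

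The remaining work is a change of variables matching the two scalings, which I expect to be routine but which must be carried out carefully. Inverting the roles of space and time in the dictionary turns the LPP statement into one about $h^{tasep}$. The hydrodynamic profile, using \eqref{eq:dens1} at $q=0$, reads $h^{tasep}(yt,t)\approx t\int_y^1 d(s)\,ds=t\left(\tfrac14-\tfrac{y}{2}+\tfrac{y^2}{4}\right)$, and substituting $y=2^{1/3}u\,t^{-1/3}$, so that the spatial argument becomes $yt=2u(t/2)^{2/3}$, reproduces exactly the centering $t/4-u(t/2)^{2/3}+u^2t^{1/3}2^{-4/3}$ in the statement; the $u^2$-coefficient $2^{-4/3}$ emerges precisely from the curvature term $y^2t/4$. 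The fluctuation scale correspondingly becomes $t^{1/3}2^{-4/3}$, and the minus sign in the denominator of the theorem records the inversion between last passage time and height: a larger passage time pushes the front backward and lowers the height, so positive $\mathcal{A}_2$ fluctuations translate into downward height fluctuations.

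The main obstacle, beyond the bookkeeping of constants and the harmless handling of floors and of the discrete-to-continuous passage, is that the statement ultimately rests on the deep determinantal asymptotics of exponential LPP. If one wished to avoid citing it, one would have to reprove the Airy-kernel limit of the relevant correlation kernel from scratch. For this reason I would treat Theorem \ref{th:TasepAiry} as an external input and confine the proof to verifying that its normalization is consistent with the standard Airy$_2$ scaling, as sketched above.
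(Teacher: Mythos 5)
The paper does not actually prove this theorem: it imports it verbatim (up to the height-function convention) from formula (12) of the survey \cite{F}, noting it can be extracted from formula (2.23) of \cite{BF} and goes back to \cite{J}. Your decision to treat the Airy asymptotics as an external input and only verify the normalization is therefore the same move the paper makes, and your TASEP--LPP dictionary $\{h^{tasep}(x,t)\ge n\}=\{G(x+n-1,n)\le t\}$, the hydrodynamic centering $t\left(\tfrac14-\tfrac y2+\tfrac{y^2}4\right)$, and the sign discussion are all correct.

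However, the LPP statement you quote contains a concrete error that your verification scheme is structurally unable to detect. The correct exponential-LPP asymptotics (in the antidiagonal form used, e.g., in \cite{FN1}) is
\begin{equation*}
\frac{G\bigl(N+\lfloor u(2N)^{2/3}\rfloor,\;N-\lfloor u(2N)^{2/3}\rfloor\bigr)-4N}{2^{4/3}N^{1/3}}\longrightarrow \mathcal{A}_2(u)-u^2,
\end{equation*}
which, transported to your horizontal parametrization (shift both coordinates by $uN^{2/3}$, costing $4uN^{2/3}$ at leading order), reads
\begin{equation*}
\frac{G\bigl(N+\lfloor 2uN^{2/3}\rfloor,\;N\bigr)-4N-4uN^{2/3}+u^2N^{1/3}}{2^{4/3}N^{1/3}}\longrightarrow \mathcal{A}_2\bigl(2^{-2/3}u\bigr),
\end{equation*}
with argument $2^{-2/3}u$, not $u$ as in your display. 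Your version is in fact internally inconsistent with the standard pairing of $\mathcal{A}_2(v)$ with the parabola $v^2$: your centering contains the parabola $u^2N^{1/3}=2^{4/3}N^{1/3}\bigl(2^{-2/3}u\bigr)^2$, which forces the argument $2^{-2/3}u$. The reason your consistency check misses this is that $\mathcal{A}_2$ is stationary, so one-point marginals --- and hence any verification based on the hydrodynamic profile \eqref{eq:dens1} and the curvature term --- determine only the centering and the fluctuation scale, never the transversal constant inside $\mathcal{A}_2$. But Theorem \ref{th:TasepAiry} is a statement about finite-dimensional distributions, and that constant is precisely what the paper consumes in Theorem \ref{th:shock2cl} through the two arguments $(-y\pm c)/2^{1/3}$; pushing your display through your (correct) dictionary would produce the theorem with $\mathcal{A}_2(2^{2/3}u)$ in place of $\mathcal{A}_2(u)$. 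The repair is simply to quote the input accurately, e.g. formula (12) of \cite{F} (which is the theorem itself) or formula (2.23) of \cite{BF}.
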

We took the statement of this theorem (up to a slight modification of the
definition of the height function) from the survey \cite{F}, see formula (12).
Statements of this type go back to Johansson \cite{J}, who was mostly
interested in a discrete time situation; this exact claim can be obtained from
formula (2.23) in \cite{BF}, see also \cite{BFS}.

The following theorem describes the distribution of the second class particle in the GUE-GUE shock on a KPZ-type scaling.
\begin{theorem}
\label{th:shock2cl}

Assume that $L = \lfloor c t^{2/3} \rfloor$, $c \in \R_{>0}$, $y \in \R$. We have
\begin{equation}
\lim_{t \to \infty} \mathrm{Prob} \left( \frac{\mathfrak{f^{tasep}} (t)}{t^{2/3}} \le y \right) = \mathrm{Prob} \left( \mathcal{A}_2 \left( \frac{-y+c}{2^{1/3}} \right) - \mathcal{A}_2 \left( \frac{-y-c}{2^{1/3}} \right) \ge 2^{4/3} yc \right),
\end{equation}
\end{theorem}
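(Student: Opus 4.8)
The plan is to combine the exact finite-$t$ identity from Proposition~\ref{prop:shock} with the Airy asymptotics of Theorem~\ref{th:TasepAiry}. By Proposition~\ref{prop:shock}, for integer $x$ we have
\begin{equation*}
\mathrm{Prob} \left( \mathfrak{f^{tasep}} (t) \le x \right) = \mathrm{Prob} \left( h^{tasep} (-x-L,t) - h^{tasep} (-x+L,t) \ge L+1 \right).
\end{equation*}
Setting $x = \lfloor y t^{2/3} \rfloor$ and $L = \lfloor c t^{2/3} \rfloor$, I would first rewrite both spatial arguments $-x \pm L$ in the form $2u (t/2)^{2/3}$ demanded by Theorem~\ref{th:TasepAiry}. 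A direct computation gives $u_{\pm} = (\mp y - c)/2^{1/3} + o(1)$ for the arguments $-x-L$ and $-x+L$ respectively, so that $h^{tasep}(-x-L,t)$ is governed by $\mathcal A_2((-y-c)/2^{1/3})$ and $h^{tasep}(-x+L,t)$ by $\mathcal A_2((-y+c)/2^{1/3})$.

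The key step is to substitute the scaling form of Theorem~\ref{th:TasepAiry} for each height function and subtract. Writing $h^{tasep}(2u_\pm(t/2)^{2/3},t)$ in terms of $\mathcal A_2(u_\pm)$, the deterministic pieces $t/4$ cancel in the difference $h^{tasep}(-x-L,t) - h^{tasep}(-x+L,t)$, while the linear-in-$u$ and quadratic-in-$u$ corrections combine. I would track these terms carefully: the $u(t/2)^{2/3}$ terms produce a contribution proportional to $L$, and the $u^2 t^{1/3} 2^{-4/3}$ terms produce a cross term in $y$ and $c$. After dividing through by the common fluctuation scale $-t^{1/3} 2^{-4/3}$ and comparing with the threshold $L+1 \sim c t^{2/3}$, the event $\{\,\cdot \ge L+1\,\}$ should rearrange into
\begin{equation*}
\mathcal{A}_2 \left( \tfrac{-y+c}{2^{1/3}} \right) - \mathcal{A}_2 \left( \tfrac{-y-c}{2^{1/3}} \right) \ge 2^{4/3} yc,
\end{equation*}
which is exactly the claimed limit. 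Since Theorem~\ref{th:TasepAiry} gives convergence in the sense of finite-dimensional distributions, the joint convergence of the pair $\bigl(h^{tasep}(-x-L,t), h^{tasep}(-x+L,t)\bigr)$ is available, and the continuous mapping theorem applied to their difference yields convergence of the probability in question, provided the limiting random variable has no atom at the threshold (which holds since $\mathcal A_2$ has continuous one-dimensional marginals).

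The main obstacle I anticipate is the bookkeeping in the second step: one must verify that the $t/4$, the $u(t/2)^{2/3}$, and the $u^2 t^{1/3}$ terms cancel or combine into precisely the right threshold after the subtraction and rescaling, keeping careful control of the $o(1)$ errors coming from the floor functions $\lfloor y t^{2/3} \rfloor$ and $\lfloor c t^{2/3} \rfloor$. These floor corrections are of order $1$ in the spatial variable, hence of order $t^{-2/3}$ after rescaling, so they are negligible at the fluctuation scale $t^{1/3}$; but one should confirm that they do not shift the argument $u_\pm$ by enough to change the limiting Airy value, using continuity of $\mathcal A_2$. A secondary technical point is that the $L+1$ on the right-hand side can be replaced by $L$ without affecting the limit, since the discrepancy is a single unit against a threshold of order $t^{2/3}$ and the rescaled difference has a continuous distribution.
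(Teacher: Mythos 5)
Your strategy coincides with the paper's own proof: apply Proposition~\ref{prop:shock} at $x=\lfloor y t^{2/3}\rfloor$, $L=\lfloor c t^{2/3}\rfloor$, substitute the expansion of Theorem~\ref{th:TasepAiry} for each of the two height functions, subtract, and rescale. Your supplementary technical points --- joint finite-dimensional convergence of the pair of height functions, negligibility of the floor corrections and of the unit discrepancy in the threshold $L+1$, continuity of $\mathcal{A}_2$ to absorb the $t$-dependence of the arguments, and absence of an atom of the limiting difference at the threshold --- are all sound, and in fact more careful than the paper, which simply writes the two expansions with $o(t^{1/3})$-in-probability errors and subtracts. One small slip: your displayed formula $u_\pm=(\mp y-c)/2^{1/3}$ is wrong for the argument $-x+L$ (the correct values are $u=(-y\mp c)/2^{1/3}$), although your following sentence then attaches the correct Airy arguments.

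The genuine problem is that the single step you leave unexecuted --- the bookkeeping you yourself flag as the main obstacle --- is exactly where the content lies, and your assertion that the event ``should rearrange into'' the printed inequality is not what the algebra yields. Carrying it out: the $t/4$ terms cancel, the $t^{2/3}$ terms give $\frac{(y+c)-(y-c)}{2}\,t^{2/3}=c\,t^{2/3}$, cancelling the threshold $L+1$ up to $O(1)$, and at scale $t^{1/3}$ the event becomes
\begin{equation*}
yc \,+\, 2^{-4/3}\left(\mathcal{A}_2\left(\tfrac{-y+c}{2^{1/3}}\right)-\mathcal{A}_2\left(\tfrac{-y-c}{2^{1/3}}\right)\right)\ \ge\ 0,
\end{equation*}
that is, $D \ge -2^{4/3}yc$ for the Airy difference $D:=\mathcal{A}_2\bigl(\tfrac{-y+c}{2^{1/3}}\bigr)-\mathcal{A}_2\bigl(\tfrac{-y-c}{2^{1/3}}\bigr)$ --- note the minus sign on the threshold. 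This agrees with the first line of the paper's concluding display, but the paper's final rearrangement (and hence the theorem as printed) flips that sign; since by stationarity the law of $D$ does not depend on $y$ and is symmetric, $\mathrm{Prob}(D\ge 2^{4/3}yc)$ is the \emph{complement} of the correct limit whenever $yc\neq 0$. A sanity check confirms which version is right: as $y\to+\infty$ the left-hand side $\mathrm{Prob}\bigl(\mathfrak{f^{tasep}}(t)\le y t^{2/3}\bigr)$ must tend to $1$, which holds for the threshold $-2^{4/3}yc$ but fails for $2^{4/3}yc$ because $D$ is tight. So had you actually performed the computation instead of asserting that it lands on the stated formula, you would have discovered that the statement as printed cannot hold; as written, your proposal certifies an inequality that the very argument you outline contradicts.
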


\begin{proof}

By Proposition \ref{prop:shock}, we need to find the probability
\begin{equation}
\label{eq:neededShock}
\mathrm{Prob} \left( h^{tasep} ((-y-c)t^{2/3},t) - h^{tasep} ((-y+c)t^{2/3},t) \ge c t^{2/3}+1 \right).
\end{equation}

By Theorem \ref{th:TasepAiry}, we have
\begin{equation*}
h^{tasep} \left( (-y - c) t^{2/3}, t \right) = \frac{t}{4} + \frac{(y+c) t^{2/3}}{2} + \frac{(y+c)^2 t^{1/3}}{4} - \mathcal{A}_2 \left( \frac{-y-c}{2^{1/3}} \right) \frac{t^{1/3}}{2^{4/3}} + o \left( t^{1/3} \right),
\end{equation*}
\begin{equation*}
h^{tasep} \left( (-y + c) t^{2/3}, t \right) = \frac{t}{4} + \frac{(y-c) t^{2/3}}{2} + \frac{(y-c)^2 t^{1/3}}{4} - \mathcal{A}_2 \left( \frac{-y+c}{2^{1/3}} \right) \frac{t^{1/3}}{2^{4/3}} + o \left( t^{1/3} \right),
\end{equation*}
where $o \left( t^{1/3} \right)$ means that after division by $t^{1/3}$ this term will converge to 0 in probability. Therefore, in the $t \to \infty$ limit \eqref{eq:neededShock} converges to
\begin{multline*}
\mathrm{Prob} \left( \frac{(y+c)^2 }{4} - \mathcal{A}_2 \left( \frac{-y-c}{2^{1/3}} \right) 2^{-4/3} - \frac{(y-c)^2}{4} + \mathcal{A}_2 \left( \frac{-y+c}{2^{1/3}} \right) 2^{-4/3} \ge 0 \right)  \\
= \mathrm{Prob} \left( \mathcal{A}_2 \left( \frac{-y+c}{2^{1/3}} \right) - \mathcal{A}_2 \left( \frac{-y-c}{2^{1/3}} \right) \ge 2^{4/3} yc \right).
\end{multline*}

\end{proof}

\begin{remark}
In the setting of Theorem \ref{th:shock2cl}, we allowed the parameter of the initial condition $L$ to depend on time $t$.
Theorem \ref{th:shock2cl} describes the behavior of the second class particle in a particular joint limit of $t,L \to \infty$, with $L = c t^{2/3}$; we show that the second class particle lives on the scale $t^{2/3}$ and that the limiting distribution depends on two sections of the Airy process. Another situation was considered in Example \ref{ex:last}: there we had a finite fixed $L$. We believe that a variety of other joint limit behaviors of $t$ and $L$ also leads to a non-trivial behavior of the second class particle, and we hope to address these questions in a subsequent work. Note that Proposition \ref{prop:shock} provides an important first step in such an analysis.
\end{remark}

\section{Stochastic multicolored vertex model}

\subsection{Definition of the colored model}
\label{subsec:vertM}

Let us recall the stochastic colored vertex model (cf. \cite{BorWh}). We consider an inhomogeneous version of this model.

Consider the square grid $\mathbb Z^2$ and its first quadrant consisting of points $(m,n)$ with $m \ge 0, n \ge 0$. There is a natural partial order on the points: $(m_1,n_1) \prec (m_2,n_2)$ iff $m_1 \le m_2$ and $n_1 \le n_2$. A \textit{Ferrers diagram} $\la$ is a finite collection of points such that $(m,n) \in \la$ and $(m_1,n_1) \prec (m,n)$ implies $(m_1,n_1) \in \la$. A \textit{skew} Ferrers diagram is a set difference $\mu / \la$ of two Ferrers diagrams $\la \subset \mu$.

We supply the edges of the square grid by orientation: All vertical edges are oriented upward, and all horizontal arrows are oriented to the right. Moreover, we supply the boundary points $(m,0)$, $(0,m)$, $m \in \mathbb Z$, with arrows entering from the outside of the quadrant; thus, each vertex in the quadrant has two incoming and two outgoing arrows.

Let $S$ be a skew Ferrers diagram, let $(m_1,n_1)$ be its top left corner, and let $(m_2,n_2)$ be its bottom right corner. Then there are $M:=m_2-m_1$ vertical arrows and $N:=n_2-n_1$ horizontal arrows which enter $S$, and the same amount of vertical and horizontal arrows exit $S$. We refer to these collections of arrows as the \textit{input} and the \textit{output} of $S$, respectively. We will enumerate positions of arrows in the input and in the output by numbers from 1 to $M+N$ by following the boundary of $S$ in the counterclockwise direction (see Figure \ref{fig:14}, left panel, for an example).

\begin{figure}
\begin{center}
\noindent{\scalebox{0.7}{\includegraphics[height=7cm]{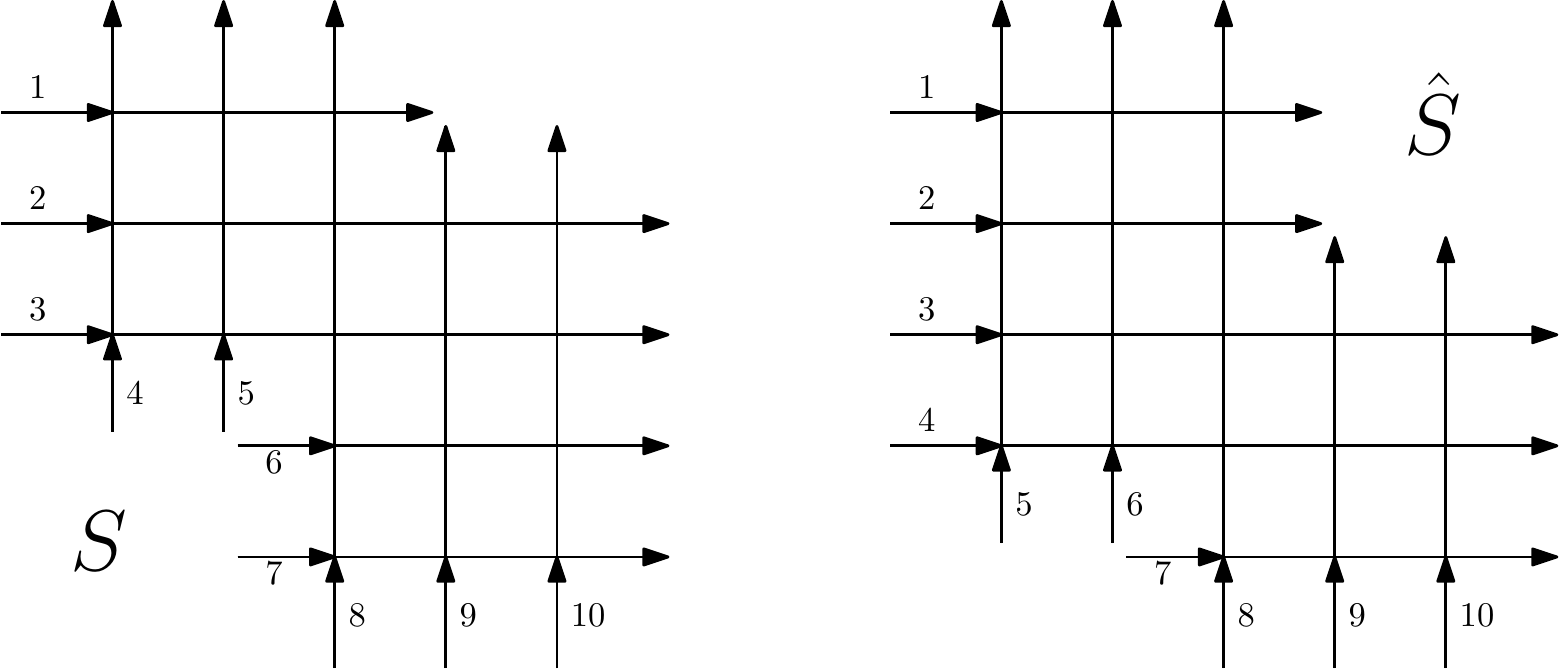}}}
\caption{ An example of a skew Ferrers diagram $S$ and its rotation by 180 degrees $\hat S$; we have $M=N=5$. The packed multi-color boundary condition is also shown.
 \label{fig:14}}
\end{center}
\end{figure}

We will enumerate colors by integers from 1 to $M+N$ as well. Let us define the colored stochastic vertex model in a skew Ferrers diagram $S$. It will depend on parameters $q$ and $\{ x_{i,j} \}_{i\ge 0, j\ge 0}$. First we assign a color to each incoming edge. Here we will mostly be interested in a specific choice of the boundary condition: We assign color $k$ to the incoming arrow at position $k$. We refer to it as the \textit{packed} initial condition.

Then, at each vertex $(i,j)$ where the colors of both incoming arrows are defined, we choose in a random way the colors of outgoing arrows; the possible variants and their probabilities are shown in Figure \ref{fig:15}, with a parameter $x = x_{i,j}$. Note that these probabilities depend on $x_{i,j}$ (which may vary between vertices), and $q$ (which is the same for all vertices). Moving in the up-right direction from the input of $S$ and iterating this procedure, we assign (randomly) colors to all arrows inside $S$ and the arrows from the output. Let us define a (random) permutation $\pi(S)$ which maps color $i$ to its output position $k$ (recall that both $i$ and $k$ range from 1 to $M+N$).

\begin{figure}
\begin{center}
\noindent{\scalebox{0.5}{\includegraphics[height=7cm]{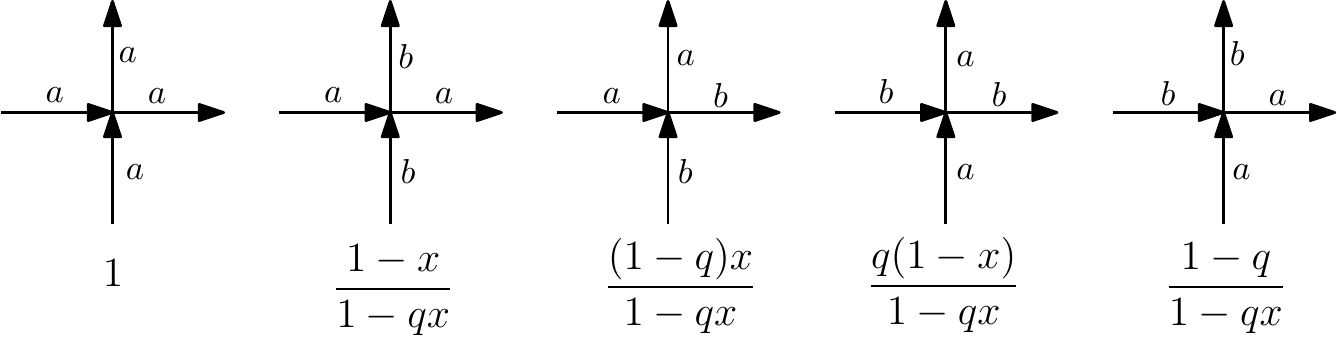}}}
\caption{ Vertices and their probabilities for colors $a<b$.
 \label{fig:15}}
\end{center}
\end{figure}

Let $\hat S$ be the rotation of a skew Ferrers diagram $S$ by 180 degrees (see Figure \ref{fig:14}, right panel). Note that this is a skew Ferrers diagram as well.

The following statement readily follows from the pictorial interpretation of Theorem \ref{th1}.
\begin{corollary}
\label{th:symmetryV}
We have
$$
\pi(S) \,{\buildrel d \over =}\, \pi^{-1} ( \hat S),
$$
where by $\,{\buildrel d \over =}\,$ we denote the equality in distribution, and the right-hand side involves the inversion in the symmetric group of $M+N$ elements.
\end{corollary}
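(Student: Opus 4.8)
The plan is to translate the vertex model entirely into the permutation-operator language of Section \ref{subsec:perm1} and then quote Theorem \ref{th1} verbatim. First I would set up the dictionary. A skew Ferrers diagram $S$ with $M$ incoming vertical and $N$ incoming horizontal arrows has a boundary input of size $M+N$, whose positions I label $1,\dots,M+N$ counterclockwise; the packed boundary condition assigns color $k$ to position $k$, so the input coloring is the identity permutation $e \in S_{M+N}$. Each vertex $(i,j)$ of $S$ is a single $2$-arrow interaction: comparing the probabilities in Figure \ref{fig:15} with the definition of $w_{t,x}$, one sees that processing the vertex is exactly the application of an operator $w_{t_v, x_{i,j}}$, where $t_v = (A,A+1)$ is the transposition swapping the two arrow-strands currently meeting at that vertex (the ``colors treated as holes'' rule of Figure \ref{fig:15} matches the two cases $\sigma^{-1}(A)<\sigma^{-1}(A+1)$ and $\sigma^{-1}(A)>\sigma^{-1}(A+1)$ of $w_{t,x}$). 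Reading the diagram in the up-right direction gives a total order on the vertices, hence an ordered product $w_{t_n,x_n}\cdots w_{t_1,x_1}$ applied to $e$; expanding this as in \eqref{eq:model-def} identifies the law of the output coloring with the coefficients $f_n(e \to \cdot)$. Concretely, $\mathrm{Prob}(\pi(S) = \pi) = f_n(e \to \pi)$ for each $\pi \in S_{M+N}$.

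Next I would carry out the same identification for the rotated diagram $\hat S$. Rotating $S$ by $180$ degrees reverses the up-right reading order into a down-left reading order, and simultaneously it reverses the counterclockwise numbering of the boundary arrows. The reversal of the reading order is precisely the passage from the product in \eqref{eq:model-def} to the opposite-order product in \eqref{eq:model-def2}: the same set of crosses $\{(t_i, x_i)\}$ is applied, but now as $w_{t_1,x_1}\cdots w_{t_n,x_n}$, so the output law of $\hat S$ is governed by the coefficients $\tilde f_n(e \to \cdot)$. Here the careful bookkeeping is to verify that the transpositions and parameters attached to the vertices of $\hat S$ are the \emph{same} family $\{(t_i,x_i)\}$, only in reverse order, which follows because rotation is an involution sending each vertex to a vertex with the identical local weight $x_{i,j}$ and the identical pair of interacting strands. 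This yields $\mathrm{Prob}(\pi(\hat S) = \pi) = \tilde f_n(e \to \pi)$.

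Finally I would assemble the conclusion. Theorem \ref{th1} gives $f_n(e \to \pi) = \tilde f_n(e \to \pi^{-1})$ for every $\pi$. Combining the two identifications,
\begin{equation*}
\mathrm{Prob}(\pi(S) = \pi) = f_n(e \to \pi) = \tilde f_n(e \to \pi^{-1}) = \mathrm{Prob}(\pi(\hat S) = \pi^{-1}),
\end{equation*}
which is exactly the asserted equality in distribution $\pi(S) \,{\buildrel d \over =}\, \pi^{-1}(\hat S)$, since the event $\{\pi(\hat S) = \pi^{-1}\}$ is the same as $\{\pi^{-1}(\hat S) = \pi\}$.

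I expect the main obstacle to be entirely combinatorial rather than probabilistic: making the correspondence between the geometric operations on $S$ (the $180$-degree rotation together with the reversal of the boundary labeling) and the algebraic operation on operators (reversing the order of the $w_{t_i,x_i}$ and inverting the target permutation) completely rigorous. In particular one must check that the counterclockwise boundary enumeration of $\hat S$ composes with rotation so that an output position $k$ of $\hat S$ corresponds to an input position of $S$ in a way consistent with the inversion $\pi \mapsto \pi^{-1}$, and that no vertex weight is mismatched under the involution. This is the content of the phrase ``readily follows from the pictorial interpretation of Theorem \ref{th1},'' and is best handled by drawing the correspondence explicitly as in Figure \ref{fig:5}; once the picture is set up correctly, every step above is a direct quotation of results already proved.
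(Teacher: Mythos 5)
Your plan is correct and is precisely the argument the paper intends: the paper's entire proof of Corollary \ref{th:symmetryV} is the single sentence that it ``readily follows from the pictorial interpretation of Theorem \ref{th1}'', and your dictionary (each vertex of $S$ is a cross $w_{t,x}$ acting on the linear order of strands, the packed boundary is the identity $e$, rotation reverses the processing order, then quote \eqref{eq:theor1}) is exactly that interpretation made explicit. One bookkeeping claim in your second paragraph is, however, not literally accurate under the natural reading of the counterclockwise enumeration of Figure \ref{fig:14}: a $180$-degree rotation reverses not only the reading order of the vertices but also the linear order of the $M+N$ strands along each intermediate cut, so the sequence attached to $\hat S$ is the reversed family \emph{conjugated by the longest element} $w_0$, i.e.\ $(A,A+1)\mapsto(M+N-A,\,M+N-A+1)$, rather than the same family $\{(t_i,x_i)\}$ in reverse order; simultaneously, the counterclockwise output enumeration of $S$ runs opposite to the strand order that extends the input enumeration, so one gets $\mathrm{Prob}(\pi(S)=\pi)=f_n(e\to w_0\pi)$ rather than $f_n(e\to\pi)$. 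These two occurrences of $w_0$ cancel: conjugating every $w_{t_i,x_i}$ by $w_0$ is a simultaneous relabeling of colors and positions that fixes $e$ and preserves all weights (the defining inequalities of $w_{t,x}$ are invariant under reversing both colors and positions), so writing $\sigma$ for the strand-order output permutation of $S$ and $\tilde\sigma$ for that of the reversed sequence, one has $\pi(S)=w_0\sigma$, $\pi(\hat S)\,{\buildrel d \over =}\,\tilde\sigma w_0$, and Theorem \ref{th1} gives $\sigma\,{\buildrel d \over =}\,\tilde\sigma^{-1}$, whence $\pi(S)\,{\buildrel d \over =}\,w_0\tilde\sigma^{-1}=(\tilde\sigma w_0)^{-1}\,{\buildrel d \over =}\,\pi^{-1}(\hat S)$. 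So your route closes and matches the paper's; the $w_0$-cancellation is exactly the verification you correctly flagged as the main obstacle, and it must be carried out rather than asserting the two sequences coincide.
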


Corollary \ref{th:symmetryV} allows to relate the distribution of the height function of the colored stochastic vertex model with those originating from the algebraic setting of symmetric functions. Below we recall necessary definitions and formulate the result.

\subsection{Hall-Littlewood processes}
\label{sec:HLdef}


A \textit{Young diagram} (or a partition) $\la$ is a finite sequence of positive integers $\la_1 \ge \la_2 \ge \cdots$. \footnote{The notion of Young diagram is equivalent to that of Ferrers diagram defined in the previous section, but we prefer to use Ferrers diagrams when speaking about collections of points in $\Z_{\ge 0}^2$, and Young diagrams when speaking about finite sequences of integers.} The \textit{length} of a partition $\la$ is the number of positive integers $\la_i$ that constitute it; it is denoted by $\la'_1$. For two Young diagrams $\la, \mu$ we write $\la \subset \mu$ if the inequalities $\la_1 \le \mu_1$, $\la_2 \le \mu_2, \dots $ hold. Similarly, we write $\lambda \prec \mu$ and say that $\la$ and $\mu$ interlace if the string of inequalities $\mu_1 \ge \la_1 \ge \mu_2 \ge \la_2 \geq \mu_3 \geq \cdots$ holds.

Let $P_{\la}$, $Q_{\la}$ be the Hall--Littlewood symmetric functions that depend on a real parameter $t$, $0 \le t < 1$ (see \cite[Chapter 3]{M}). Let $\{ a_i \}_{i=1}^{\infty}$, $\{ b_j \}_{j=1}^{\infty}$, be two collections of nonnegative reals such that $a_i b_j <1$, for all $i$ and $j$. For positive integers $M,N$, let us consider a finite sequence $s:= ( s(1), \dots, s(M+N))$, such that $s(i) \in \{-1,+1\}$, $\sum_{i=1}^{M+N} s(i) = M-N$, and $s(1)=+1$, $s(M+N)=-1$. We will also abbreviate $+1$ and $-1$ by $+$ and $-$. Let $p(i)$ be the number of pluses in the substring $(s(1), s(2), \dots, s(i))$, and let $m(i)$ be the number of minuses in the same substring.

Consider a collection of partitions $\la^{(1)} * \la^{(2)} * \dots * \la^{(M+N-1)}$, where $*$ stands for either $\subset$ or $\supset$ in the following way: if $s(i)=+$, then we have $\la^{(i-1)} \subset \la^{(i)}$; if $s(i)=-$, then $\la^{(i-1)} \supset \la^{(i)}$, and it is assumed that $\la^{0} = \la^{M+N} = \varnothing$. Set
\begin{equation}
\label{eq:HL-proc-def-1}
W^{(s,i)}_{ M,N} :=
\begin{cases}
P_{\la^{(i)} / \la^{(i-1)}} (a_{p(i)}), \qquad & \mbox{if $s(i)=+$}, \\
Q_{\la^{(i-1)} / \la^{(i)}} (b_{N-m(i)+1}), \qquad & \mbox{if $s(i)=-$}.
\end{cases}
\end{equation}

Define a \textit{Hall-Littlewood process} as a probability measure on collections of partitions $\la^{(1)}* \la^{(2)} \dots * \la^{(M+N-1)}$ by the formula
\begin{equation}
\label{eq:HL-proc-def-2}
\mathrm{Prob}_{M,N}^s ( \la^{(1)} * \la^{(2)} * \dots * \la^{(M+N-1)} ) := \frac{\displaystyle \prod_{i=1}^{M+N} W^{(s,i)}_{M,N}}{\Pi^{s}(a_1,\dots,a_M; b_1,\dots,b_N) },
\end{equation}
where $\Pi^{s}(a_1,\dots,a_M; b_1,\dots,b_N)$ is an explicit normalization constant (see \cite[Chapter 2]{BC}).

\subsection{Distribution in a color-blind model with a special form of inhomogeneities}
\label{sec:BBW}

Let us consider the following special case of the construction from Section \ref{subsec:vertM}. Let us start with a Ferrers diagram $S$ (rather than a skew Ferrers diagram).
Consider the following input: all horizontal arrows have color $1$, and all vertical arrows have color $+\infty$, so we are in a color-blind case. For $m,n \in \Z_{\ge 0}$, let $h(m,n)$ be the number of paths of color 1 which go through or below the point $(m,n)$ (see Figure \ref{fig:16} for an example). We also chose inhomogeneity parameters in a special way: For two collections $\{ a_i \}, \{ b_j \}$ as in Section \ref{sec:HLdef} we set $x_{i,j} = a_i b_j$.
The joint distribution of the output in this model can be described in terms of a Hall-Littlewood process. Let us recall the statement of this result.

\begin{figure}
\begin{center}
\noindent{\scalebox{0.62}{\includegraphics[height=7cm]{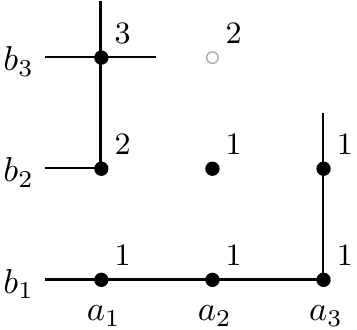}}}
\caption{ The depicted Ferrers diagram (consisting of black dots) corresponds to $s=(+,-,+,+, -, -)$. A possible configuration of paths of color 1 and corresponding values of $h(m,n)$ are shown.
 \label{fig:16}}
\end{center}
\end{figure}

The top-right boundary of $S$ can be encoded by a string $s$ of pluses and minuses via the following rule: we start at the top-left corner, and move step by step along the top-right boundary. If we are making a step to the right, we write a plus; if the step is downward, we write a minus (again, see Figure \ref{fig:16} for an example). Conversely, given a string $s$ as in Section \ref{sec:HLdef}, one can define the Ferrers diagram corresponding to it via defining its top-right boundary as a collection of points $((x_1 (s), y_1(s)), \dots, (x_{M+N-1} (s),y_{M+N-1} (s)))$ in $\mathbb Z_{\ge 0}^2$ such that:
\begin{enumerate}[{\bf 1.}]
\item $(x_1 (s) ,y_1 (s)) = (0,N-1)$, $(x_{M+N-1} (s), y_{M+N-1} (s) ) = (M-1,0)$.
\item For $2 \le i \le M+N-1$, if $s(i)=+$, then $x_{i} (s) = x_{i-1} (s) +1$, $y_{i} (s) = y_{i-1} (s)$. If $s(i)=-$, then $x_{i} (s)=x_{i-1} (s)$, $y_{i} (s)=y_{i-1} (s)-1$.
\end{enumerate}

It is clear that $s$ uniquely determines such a collection.

Let $\la (1,s) * \la (2,s) * \dots * \la (M+N-1, s)$ be distributed according to the Hall-Littlewood process defined by \eqref{eq:HL-proc-def-2} (we use the same parameters $\{ a_i \}, \{ b_j \}$, and the string $s$).

\begin{theorem}[\cite{BBW}, Theorem 4.3]
\label{th:BBW}
For any choice of parameters, the random vector $\{ h(x_i (s)+1, y_i(s)) \}_{i=1}^{M+N-1}$ has the same distribution as $\{ y_i(s) - \la'_1 (i,s) \}_{i=1}^{M+N-1}$.
\end{theorem}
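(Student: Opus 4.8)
The plan is to realize the color-blind vertex model as a Markov evolution of a partition read along the top-right boundary of $S$, identify its transition weights with the Hall-Littlewood branching weights, and then translate the height function $h$ into the resulting partition data. Reading the boundary of $S$ from its top-left corner $(x_1(s),y_1(s))$ to its bottom-right corner $(x_{M+N-1}(s),y_{M+N-1}(s))$, I would record at the $i$-th boundary edge the horizontal positions of the color-$1$ paths that have been created so far but have not yet exited, and pack these into a partition $\la^{(i)}$. A ``$+$'' step adds a column governed by a column parameter $a_{p(i)}$ and can only enlarge the occupied positions, so $\la^{(i-1)} \subset \la^{(i)}$; a ``$-$'' step is governed by a row parameter $b_{N-m(i)+1}$ and removes the paths that exit, giving $\la^{(i-1)} \supset \la^{(i)}$, with $\la^{(0)}=\la^{(M+N)}=\varnothing$. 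The goal is to show that this random sequence is distributed exactly as the Hall-Littlewood process of \eqref{eq:HL-proc-def-2}, and then to rewrite $h$ in terms of $\la^{(i)}$.

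The substantive step is the weight matching. First I would specialize the vertex weights of Figure \ref{fig:15} to the two-color alphabet $\{1,+\infty\}$ with $x_{i,j}=a_i b_j$; these are precisely the stochastic six-vertex weights. I would then invoke the identification of single-species stochastic vertex-model partition functions with the symmetric rational functions $F_{\la/\mu}$ and $G_{\la/\mu}$ of Borodin and Borodin--Petrov, which in this spin-$\tfrac12$ specialization coincide, up to explicit prefactors, with the skew Hall-Littlewood functions $P_{\la/\mu}$ and $Q_{\la/\mu}$ at parameter $t=q$. Concretely, the one-row transfer operator attached to a ``$+$'' step acts on boundary states as the Hall-Littlewood vertex operator adding a horizontal strip with weight $P_{\la^{(i)}/\la^{(i-1)}}(a_{p(i)})$, and the operator attached to a ``$-$'' step removes a horizontal strip with weight $Q_{\la^{(i-1)}/\la^{(i)}}(b_{N-m(i)+1})$. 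Commuting the incoming rapidities into the order dictated by the string $s$ via the Yang--Baxter equation organizes the partition function as the product $\prod_{i=1}^{M+N} W^{(s,i)}_{M,N}$, and the stochasticity of the weights forces the overall normalization to be the Hall-Littlewood Cauchy-type constant $\Pi^{s}(a_1,\dots,a_M;b_1,\dots,b_N)$, reproducing \eqref{eq:HL-proc-def-1}--\eqref{eq:HL-proc-def-2} exactly.

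Having matched the measures, the remaining step is the deterministic dictionary between the path ensemble and the partition sequence. By construction the number of parts $\la'_1(i,s)$ of $\la^{(i)}$ equals the number of color-$1$ paths present at the $i$-th boundary edge, while $y_i(s)$ counts the lattice rows lying weakly below the corresponding boundary point. Comparing these two counts with the definition of $h$ as the number of color-$1$ paths passing through or below $(x_i(s)+1,y_i(s))$ gives the identity $h(x_i(s)+1,y_i(s)) = y_i(s) - \la'_1(i,s)$ for all $i$ simultaneously; this is purely combinatorial bookkeeping once $\la^{(i)}$ is defined as the record of occupied path positions along the boundary. Pushing the Hall-Littlewood process forward under $\la^{(i)} \mapsto y_i(s)-\la'_1(i,s)$ then yields the asserted equality of joint distributions.

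I expect the main obstacle to be the precise weight matching of the middle paragraph: identifying the probabilistic transfer operators of the vertex model with the algebraically normalized Hall-Littlewood operators requires tracking the gauge prefactors carefully, so that the stochastic normalization collapses exactly into $\Pi^{s}$, and it requires verifying that the parameter bookkeeping $a_{p(i)}$ versus $b_{N-m(i)+1}$ matches the order in which rapidities are fed in along the boundary. The path-to-partition dictionary of the third paragraph, by contrast, is routine once the support of the branching (horizontal strips) is confirmed from the single-file structure of the color-$1$ paths; and the color-position symmetry (Corollary \ref{th:symmetryV}) is \emph{not} what drives this result, since the Hall-Littlewood structure is an intrinsic feature of the color-blind partition function rather than a consequence of the $180^\circ$ rotation.
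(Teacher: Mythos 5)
First, a point of comparison that matters here: the paper itself contains \emph{no} proof of this statement. Theorem \ref{th:BBW} is imported verbatim from \cite{BBW} (Theorem 4.3 there), and the text explicitly refers to \cite{BM} and \cite{BP} for other proofs; so there is no in-paper argument to measure you against. Your closing remark is also correct and worth affirming: Corollary \ref{th:symmetryV} plays no role in this result --- in this paper the logic runs the other way, with Theorem \ref{th:BBW} used as an external input that is then combined with the color-position symmetry to yield Theorem \ref{th:HFmultiCvert}.

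Measured against the cited source rather than the paper, your outline follows the standard route: read the top-right boundary of $S$ as the string $s$ of $+/-$ steps, identify the column and row transfer operators of the (gauge-transformed) stochastic model with the skew Hall--Littlewood operators at $t=q$, with matrix elements $P_{\la/\mu}(a_{p(i)})$ and $Q_{\la/\mu}(b_{N-m(i)+1})$ supported on horizontal strips, pin the normalization to $\Pi^{s}$ by total mass one, and finish with the deterministic dictionary $h(x_i(s)+1,y_i(s)) = y_i(s) - \la'_1(i,s)$. Two caveats. The load-bearing middle paragraph is asserted rather than executed: the spin-$\tfrac12$ reduction of the $F/G$ functions to skew Hall--Littlewood functions with exactly matching prefactors, the telescoping of the gauge factors along the entire boundary (without which ``stochasticity forces the normalization to be $\Pi^{s}$'' does not follow), and the verification of the parameter bookkeeping $a_{p(i)}$ versus $b_{N-m(i)+1}$ constitute essentially the whole content of \cite{BBW}; you flag this yourself, which is honest, but as written the proposal is an architecture whose hard step is deferred to the very literature it would reprove. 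Second, a small slip: a $+$ step grows the partition by a \emph{horizontal strip} (the support of single-variable skew $P$), not by ``a column''; you correct this implicitly in your final paragraph. With these reservations, the proposed strategy is sound and coincides with the known proof route.
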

See also \cite{BM}, \cite{BP} for other proofs of this result and its extensions.

\subsection{Height function: One-point distribution in the colored model}

Let $S$ be a Ferrers diagram. Consider a multi-colored vertex model in the domain $\hat S$ (it may be a \textit{skew} Ferrers diagram) via the construction from Section \ref{subsec:vertM} with the packed initial condition involving $M+N$ colors.
As inhomogeneity parameters, we first consider collections of reals $\{ a_i \}, \{ b_j \}$ as in Section \ref{sec:HLdef}, and then define the inhomogeneities $x_{i,j} := a_{M-i} b_{N-j}$ (see Figure \ref{fig:17}). Compared to Section \ref{sec:BBW} and Figure \ref{fig:16}, we reverse/rotate the order of inhomogeneities here.

\begin{figure}
\begin{center}
\noindent{\scalebox{0.7}{\includegraphics[height=7cm]{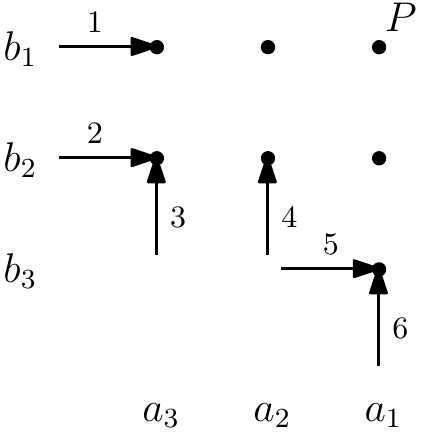}}}
\caption{ The skew Ferrers diagram $\hat S$ obtained by rotation of $S$ from Figure \ref{fig:16}. The packed colored initial condition and the reverse order of parameters are shown.
 \label{fig:17}}
\end{center}
\end{figure}

Let $P$ be the top right corner of $\hat S$ (which is unique since $S$ has the unique bottom left corner). For each $i \in \{1, \dots, M+N-1 \}$ define the colored height function $\mathcal{H}_i (P)$ to be the number of paths of colors $\le (M+N-i)$ which go through or below $P$.

\begin{theorem}
\label{th:HFmultiCvert}
For any choice of parameters, the random vector $\{\mathcal{H}_i (P) \}_{i=1}^{M+N-1}$ has the same distribution as $ \{ h(x_i (s)+1, y_i(s)) \}_{i=1}^{M+N-1}$ defined in Section \ref{sec:BBW}. Therefore, the random vector $\{ \mathcal{H}_i (P) \}_{i=1}^{M+N-1}$ has the same distribution as $\{ y_i(s) - \la'_1 (i,s) \}_{i=1}^{M+N-1}$, where $\{ \la'_1 (i,s) \}$ are lengths of partitions distributed according to the Hall-Littlewood process defined as in Theorem \ref{th:BBW}.

\end{theorem}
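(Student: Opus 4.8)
The plan is to deduce the theorem from the color-position symmetry of Corollary \ref{th:symmetryV} together with the color-blind identity of Theorem \ref{th:BBW}. Since the second assertion is an immediate consequence of the first one and Theorem \ref{th:BBW}, the entire content is the joint distributional identity
$$
\{\mathcal H_i(P)\}_{i=1}^{M+N-1} \,{\buildrel d \over =}\, \{h(x_i(s)+1,y_i(s))\}_{i=1}^{M+N-1}.
$$
The starting observation is that the two models in question live on mutually rotated domains: the colored model of this subsection sits on $\hat S$ with inhomogeneities $x_{i,j}=a_{M-i}b_{N-j}$, while the color-blind model of Section \ref{sec:BBW} sits on $S$ with $x_{i,j}=a_i b_j$. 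First I would check that the $180^\circ$ rotation taking $S$ to $\hat S$ carries the vertex weight $a_i b_j$ at a vertex of $S$ to the weight at its image vertex in $\hat S$; this is exactly why the inhomogeneities were chosen as $a_{M-i}b_{N-j}$, and it makes Corollary \ref{th:symmetryV} applicable to this matched pair, yielding $\pi(\hat S)\,{\buildrel d \over =}\,\pi^{-1}(S)$.

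Next I would rewrite the colored height function as a functional of the output permutation and transport it across the symmetry. By definition $\mathcal H_i(P)$ counts the paths whose color is $\le M+N-i$ among those exiting through or below the top-right corner $P$; since outputs are enumerated along the boundary, ``through or below $P$'' singles out a fixed contiguous block $B$ of output positions, so $\mathcal H_i(P)=\#\{c\le M+N-i:\ \pi(\hat S)(c)\in B\}$. Applying $\pi(\hat S)\,{\buildrel d \over =}\,\pi^{-1}(S)$ turns this, jointly in $i$, into the analogous count for $\pi^{-1}(S)$, in which the roles of ``a prefix of colors'' and ``a block of positions'' are exchanged. Under the rotation this exchanged count is precisely a color-blind (two-color) height function on $S$: merging the colors $\le M+N-i$ into a single particle color and the remaining colors into holes, the number of particle-paths passing through or below the image point equals $h$ at the boundary point indexed by $i$. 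Tracking the rotation on coordinates identifies this point with $(x_i(s)+1,y_i(s))$, which gives the displayed identity, and the equality is joint in $i$ because the symmetry is an equality of laws of the whole permutation.

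The main obstacle is the bookkeeping in this transport step. The colored height function on $\hat S$ varies the \emph{color} threshold $M+N-i$ at the single fixed point $P$, whereas $h$ on $S$ varies the \emph{point} $(x_i(s),y_i(s))$ along the boundary of $S$ at a fixed color cut; one must verify that color-position symmetry interchanges these two families so that the index $i$ agrees on the two sides, and that the off-by-one shift producing $x_i(s)+1$ is the correct one relative to the conventions for $h$ and for the encoding $s$ of the top-right boundary. One also has to confirm that the color-merging projection commutes with the symmetry of Corollary \ref{th:symmetryV}, \emph{i.e.}\ that forgetting colors before or after applying the rotation/inversion yields the same law; this follows from the standard color-merging (``forgetting'') coupling used repeatedly above. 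Once these identifications are in place, Theorem \ref{th:BBW} converts $\{h(x_i(s)+1,y_i(s))\}_{i=1}^{M+N-1}$ into $\{y_i(s)-\la'_1(i,s)\}_{i=1}^{M+N-1}$ for the Hall-Littlewood process, completing the proof.
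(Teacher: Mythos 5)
Your proposal is correct and follows essentially the same route as the paper: both arguments rest on Corollary \ref{th:symmetryV} applied to the matched pair $(S,\hat S)$ (the choice $x_{i,j}=a_{M-i}b_{N-j}$ being exactly what makes the rotation compatible), together with the observation that a permutation $\nu$ realizes the event $\{\mathcal H_i(P)=r_i\}_{i=1}^{M+N-1}$ in the colored model on $\hat S$ if and only if $\nu^{-1}$ realizes $\{h(x_i(s)+1,y_i(s))=r_i\}_{i=1}^{M+N-1}$ in the color-blind model on $S$, after which the second claim follows from Theorem \ref{th:BBW}. The paper phrases this by summing over contributing permutations $\nu$ while you transport the height functionals through the distributional identity $\pi(\hat S)\,{\buildrel d \over =}\,\pi^{-1}(S)$, but this is the same argument, and the bookkeeping you flag (colors-versus-positions exchange under inversion, the block of outputs below $P$ versus the boundary points of $S$) is precisely what the paper's phrase ``the definitions imply'' silently covers.
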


\begin{proof}

Let us start with the first claim. For a collection of integers $(r_1, \dots, r_{M+N-1})$, consider the event $\{ \mathcal{H}_i (P) = r_i \}_{i=1}^{M+N-1}$ and let $\nu$ be a (fixed) permutation of $(M+N)$ elements which contributes to the event (we think about $\nu$ as a map from the input to the output, see Section \ref{subsec:vertM}). Then the definitions imply that the inverse permutation $\nu^{-1}$ contributes to the event $\{ h(x_i (s)+1, y_i(s)) = r_i \}_{i=1}^{M+N-1}$. Due to Corollary \ref{th:symmetryV}, we know that they contribute the same probability to these events. Summing over all $\nu$ concludes the proof of the first claim.

The second claim immediately follows from the first one and Theorem \ref{th:BBW}.

\end{proof}

\begin{remark}
In case when both $S$ and $\hat S$ are non-skew Ferrers diagrams, a very similar claim was proved in \cite{BorWh}, cf. (1.6.5) there. Yet neither of these claims imply another one.
\end{remark}

\begin{remark}

Theorem \ref{th:BBW} allows to access a large family of observables on any down-right path of the color-blind stochastic six vertex model with inhomogeneities (see \cite{BM} for formulas for general Hall-Littlewood processes). Here we study the height function in a more complicated object: the colored stochastic vertex model. In particular, for the colored model the height function is a random vector rather than a random function. Theorem \ref{th:HFmultiCvert} gives an exact distribution of this value at any point of space. This leads to  explicit formulas for averages of various observables of this vector. The tools available for asymptotic analysis become much more numerous in the (still very nontrivial) case $q=0$, when Hall-Littlewood functions are degenerated into Schur functions. The Schur proccesses are known to enjoy a determinantal structure, which should allow for delicate asymptotic analysis of $\{\mathcal{H}_i (P) \}_{i=1}^{M+N-1}$.


\end{remark}

\begin{remark}

Similarly to Section \ref{sec:finPert}, one can apply finite perturbations to the packed initial condition; this will give a description of the output as a certain functional of the Hall-Littlewood (or Schur, for $q=0$) process.

\end{remark}


\begin{thebibliography}{KMMO}

\bibitem[A1]{Ag1} A. Aggarwal, \textit{Current Fluctuations of the Stationary ASEP and Six-Vertex Model}, Duke Math. J. 167 (2018), 269--384, {\tt arXiv:1608.04726}.

\bibitem[A2]{Ag2} A. Aggarwal, \textit{Limit Shapes and Local Statistics for the Stochastic Six-Vertex Model}, preprint, {\tt arXiv:1902.10867}.

\bibitem[AB]{AB} A. Aggarwal, A. Borodin, \emph{Phase transitions in the ASEP and stochastic six-vertex model}, Ann. Probab. 47 (2019), 613--689, {\tt arXiv:1607.08684}.

\bibitem[AAV]{AmAnV} G.~Amir, O.~Angel, B.~Valko, \textit{The TASEP speed process}, Ann. Probab.
 39 (2011), 1205--1242, {\tt arXiv:0811.3706}.

\bibitem[AV]{AV} E. Andjel, M. Vares, \textit{Hydrodynamic equations for attractive particle systems on} $\Z$, J. Stat. Phys. 47 (1987), 265--288.

\bibitem[AHR]{AnHR} O.~Angel, A.~Holroyd, D.~Romik, \textit{The oriented swap process}, Ann. Probab.
37 (2009), 1970--1998, {\tt arXiv:0806.2222}.

\bibitem[B1]{B} C. Bahadoran, \textit{Hydrodynamical limit for spatially heterogeneous simple exclusion processes}, Probab. Theory Relat. Fields 110 (1998), 287--331.

\bibitem[BF1]{BeF} A. Benassi, J. Fouque, \textit{Hydrodynamical Limit for the Asymmetric Simple Exclusion Process}, Ann. Probab.
15 (1987), 546--560.

\bibitem[B2]{Bo} A.~Borodin, \emph{Stochastic higher spin six vertex model and Macdonald measures},
Journal of Mathematical Physics 59 (2018), 023301, {\tt arXiv:1608.01553}.

\bibitem[BBW]{BBW} A.~Borodin, A.~Bufetov, M.~Wheeler, \textit{Between the stochastix six vertex model and Hall-Littlewood processes}, to appear in Journal of Combinatorial theory, Series A, \texttt{arXiv:1611.09486}.

\bibitem[BC]{BC} A.~Borodin, I.~Corwin, \textit{Macdonald Processes}, Probab. Theory and Related Fields 158 (2014),
 225--400, \texttt{arXiv:1111.4408}.

\bibitem[BCG]{BCG} A.~Borodin, I.~Corwin, V.~Gorin, \textit{Stochastic six-vertex model}, Duke Mathematical Journal, 165 (2016), 563--624, \texttt{arXiv:1407.6729}.

\bibitem[BF2]{BF} A. Borodin, P.L. Ferrari, \textit{Large time asymptotics of growth models
on space-like paths I: PushASEP}, Electron. J. Probab. 13 (2008), 1380--1418, {\tt arXiv:0707.2813}.

\bibitem[BFS]{BFS} A. Borodin, P.L. Ferrari, T. Sasamoto, \textit{Transition between Airy\_1
and Airy\_2 processes and TASEP fluctuations}, Comm. Pure Appl. Math. 61 (2008), 1603--1629, { \tt arXiv:math-ph/0703023}.

\bibitem[BG]{BG} A.~Borodin, V.~Gorin, \emph{A stochastic telegraph equation from the six-vertex model}, to appear in Ann. Probab., {\tt arXiv:1803.09137}.

\bibitem[BO]{BO} A.~Borodin, G.~Olshanski, \emph{The ASEP and Determinantal Point Processes}, Communications in Mathematical Physics 353 (2017), 853--903, {\tt arXiv:1608.01564}.
    
\bibitem[BP]{BP3} A. Borodin, L. Petrov, \textit{Lectures on Integrable probability: Stochastic vertex models
and symmetric functions}, Lecture Notes of the Les Houches Summer School 104 (2016),
{\tt arXiv:1605.01349}.

\bibitem[BW]{BorWh} A.~Borodin, M.~Wheeler, \textit{Coloured stochastic vertex models
and their spectral theory}, preprint, {\tt arXiv:1808.01866}.

\bibitem[BM]{BM} A. Bufetov, K. Matveev, \textit{Hall-Littlewood RSK field}, Selecta Mathematica
24 (2018), 4839--4884, {\tt arXiv:1705.07169}.

\bibitem[BP]{BP} A. Bufetov, L. Petrov, \textit{Yang-Baxter field for spin Hall-Littlewood symmetric functions}, preprint,
{\tt arXiv:1712.04584}.

\bibitem[CP]{CP} E. Cator, L. Pimentel, \textit{Busemann functions and the speed of a second class particle
in the rarefaction fan}, Ann. Probab. 41 (2013), 2401--2425, {\tt arXiv:1008.1812}.

\bibitem[CGST]{CGST} I.~Corwin, P.~Ghosal, H.~Shen, L.-C.~Tsai, \emph{Stochastic PDE Limit of
the Six Vertex Model}, preprint, {\tt arXiv:1803.08120}.

\bibitem[CD]{CD} I.~Corwin, E.~Dimitrov, \emph{Transversal Fluctuations of the ASEP, Stochastic Six Vertex Model, and Hall-Littlewood Gibbsian Line Ensembles}, Communications in Mathematical Physics
363 (2018), 435--501, {\tt arXiv:1703.07180}.


\bibitem[DLS]{DLS} B. Derrida, J. Lebowitz, E. Speer, \textit{Shock profiles for the asymmetric simple exclusion process in one dimension}, Journal of Statistical Physics, 89 (1997), 135--167.

\bibitem[D]{D} E.~Dimitrov, \emph{KPZ and Airy limits of Hall-Littlewood random plane partitions}, Ann. Inst. H. Poincar\'e Probab. Statist. 54 (2018), 640--693, {\tt arXiv:1602.00727}.

\bibitem[F1]{F} P. L. Ferrari, \textit{From interacting particle systems to random matrices}, J. Stat. Mech. (2010), P10016, {\tt arXiv:1008.4853}.

\bibitem[F2]{F2} P.A. Ferrari, \textit{Shock fluctuations in asymmetric simple exclusion}, Probability Theory and Related Fields 91 (1992), 81--101.

\bibitem[FK]{FK} P. A. Ferrari, C. Kipnis, \textit{Second class particles in the rarefation fan}, Ann. Inst. Henri Poincare Probab.
Statist. 31 (1995), 143--154.

\bibitem[FGM]{FGM} P. A. Ferrari, P. Goncalves, J. Martin, \textit{Collision probabilities in the rarefaction fan of
asymmetric exclusion processes}, Ann. Inst. Henri Poincare Probab. Stat., 45 (2009), 1048--1064, {\tt arXiv:0804.1770}.

\bibitem[FKS]{FKS} P. A. Ferrari, C. Kipnis, E. Saada, \textit{Microscopic Structure of Travelling Waves in the Asymmetric Simple Exclusion Process}, Ann. Probab. 19 (1991), 226--244.

\bibitem[FN1]{FN1} P.L. Ferrari, P. Nejjar, \textit{Anomalous shock fluctuations in TASEP and last passage percolation models}, Probab. Theory Relat. Fields 61 (2015), 61--109.

\bibitem[FN2]{FN2} P.L. Ferrari, P. Nejjar, \textit{Fluctuations of the competition interface in presence of shocks}, ALEA Lat. Am. J. Probab. Math. Stat. 14 (2017), 299--325.

\bibitem[GKS]{GKS} N. Georgiou, R. Kumar, T. Seppalainen, \textit{TASEP with discontinuous jump rates}, ALEA Lat. Am. J. Probab. Math. Stat. 7 (2010), 293--318, {\tt arXiv:1003.3218}.

\bibitem[GSZ]{GSZ} P. Ghosal, A. Saenz, E. Zell, \textit{Limiting speed of a second class particle in ASEP}, preprint, {\tt arXiv:1903.09615}.

\bibitem[GS]{GS} L.-H. Gwa, H. Spohn, \textit{Six-vertex model, roughened surfaces, and an asymmetric spin Hamiltonian},
Physical review letters 68 (1992), 725--728.

\bibitem[H1]{H1} T. E. Harris, \textit{Additive Set-Valued Markov Processes and Graphical Methods}, Ann. Probab. 6 (1978), 355--378.

\bibitem[H2]{H2} T. E. Harris, \textit{Nearest-Neighbor Markov Interaction Processes on Multidimensional Lattices}, Adv. Math. 9 (1972), 66--89.

\bibitem[H3]{H3} R. Holley, \textit{A Class of Interactions in an Infinite Particle System}, Adv. Math. 5 (1970), 291--309.

\bibitem[J]{J} K. Johansson, \textit{Discrete polynuclear growth and determinantal processes},
Comm. Math. Phys. 242 (2003), 277--329, {\tt arXiv:math/0206208}.

\bibitem[KL]{KL} C. Kipnis, C. Landim, \textit{Scaling Limits of Interacting Particle Systems}, 1999, Grundlehren der mathematischen Wissenschaften, Vol. 320, Springer-Verlag Berlin Heidelberg.

\bibitem[K]{K} J. Kuan, \emph{An algebraic construction of duality functions for the stochastic $U_q(A^{(1)}_n)$ vertex model and its degenerations}, Communications in Mathematical Physics 359 (2018), 121--187, {\tt arXiv:1701.04468}.

\bibitem[KMMO]{KMMO} A. Kuniba, V. Mangazeev, S. Maruyama, M. Okado, \emph{Stochastic $R$ matrix for $U_q(A^{(1)}_n)$}, Nuclear Physics B 913 (2016), 248--277 {\tt arXiv:1604.08304}.

\bibitem[L1]{L1} T. Liggett, \textit{Ergodic Theorems for the Asymmetric Simple Exclusion Process}, Transactions of the American Mathematical Society 213 (1975), 237--261.

\bibitem[L2]{L2} T. Liggett, \textit{Ergodic Theorems for the Asymmetric Simple Exclusion Process II}, Ann. Probab.
5 (1977), 795--801.

\bibitem[L3]{L3} T. Liggett, \textit{Interacting Particle Systems}, 1985, Grundlehren der mathematischen Wissenschaften, Vol. 276.

\bibitem[L4]{L4} T. Liggett, \textit{Existence Theorems for Infinite Particle Systems}, Trans. Amer. Math. Soc. 165 (1972), 471--481.

\bibitem[M]{M} I.~G.~Macdonald, \textit{Symmetric functions and Hall polynomials}, Second Edition. Oxford University Press,
1999.

\bibitem[QR]{QR} J. Quastel, M. Rahman, \textit{TASEP fluctuations with soft-shock initial data}, preprint, \textit{arXiv:1801.06143}.

\bibitem[RS]{RS} N.~Reshetikhin, A.~Sridhar, \emph{Limit Shapes of the Stochastic Six Vertex Model}, preprint,
{\tt arXiv:1609.01756}.

\bibitem[R]{R} H. Rost, \textit{Non-equilibrium behaviour of a many particle process: Density profile and local equilibria}, Z. Wahrscheinlichkeitstheorie verw. Gebiete 58 (1981), 41--53.

\bibitem[ST]{ST} H.~Shen, L.-C.~Tsai, \emph{Stochastic Telegraph Equation Limit for the Stochastic Six Vertex Model}, preprint, {\tt arXiv:1807.04678}.

\end{thebibliography}
\end{document}